\newtheorem{theorem}{Theorem}[section]
\newtheorem{proposition}[theorem]{Proposition}
\newtheorem{corollary}[theorem]{Corollary}
\theoremstyle{definition}
\newtheorem{definition}[theorem]{Definition}
\theoremstyle{remark}
\newtheorem{remark}[theorem]{Remark}
\numberwithin{equation}{section}
\newcommand{\bx}{\mathbf{x}}
\newcommand{\by}{{y}}
\newcommand{\bq}{\mathbf{q}}
\newcommand{\bu}{\mathbf{u}}
\newcommand{\bw}{\mathbf{w}}
\newcommand{\bfu}{\mathbf{u}}
\newcommand{\bff}{\mathbf{f}}
\newcommand{\bv}{\mathbf{v}}
\newcommand{\bn}{\mathbf{n}}
\newcommand{\dy}{\, \mathrm{d}y}
\newcommand{\dd}{\,\mathrm{d}}
\newcommand{\dq}{\, \mathrm{d} \mathbf{q}}
\newcommand{\dx}{\, \mathrm{d} \mathbf{x}}
\newcommand{\dt}{\, \mathrm{d}t}
\newcommand{\ds}{\, \mathrm{d}s}
\newcommand{\divx}{\mathrm{div}_{\mathbf{x}}}
\newcommand{\divq}{\mathrm{div}_{\mathbf{q}}}
\newcommand{\delx}{\Delta_{\mathbf{x}}}
\newcommand{\nabx}{\nabla_{\mathbf{x}}}
\newcommand{\naby}{\partial_y}
\newcommand{\nabq}{\nabla_{\mathbf{q}}}
\newcommand{\Delx}{\Delta_{\mathbf{x}}}
\newcommand{\bT}{\mathbb{T}}
\newcommand{\bU}{\mathbb{U}}
\newcommand{\R}{\mathbb{R}}
\newcommand{\Oeta}{\Omega_{\eta}}
\newcommand{\Ozeta}{\Omega_{\zeta}}
\begin{document} 

\title[The inviscid limit]{Equilibration and convected limit in 2D-1D corotational Oldroyd's fluid-structure interaction}  


%
%
\author{Prince Romeo Mensah}
\address{Faculty of Mathematics, University of Duisburg-Essen, Thea-Leymann-Strasse 9, 45127 Essen,
Germany}

\subjclass[2020]{76D03; 74F10; 35Q30; 35Q84; 82D60}

\date{\today}


\keywords{Incompressible Navier--Stokes--Fokker--Planck system, Oldroyd, Fluid-Structure interaction, Polymeric fluid, Corotational fluid}

\begin{abstract} 
We consider a solute-solvent-structure mutually coupled system of equations given by an Oldroyd-type model for a  two-dimensional dilute corotational polymer fluid with solute diffusion and damping that is interacting with a one-dimensional viscoelastic shell. Firstly, we give the rate at which its solution decays exponentially in time to the equilibrium solution, independent of the choice of the initial datum. Secondly, as the polymer relaxation time goes to infinity (or, equivalently, the center-of mass diffusion goes to zero), we show that any family of strong solutions of the system described above, that is parametrized by the 
relaxation time, converges to an essentially bounded weak solution of a  corotational polymer fluid-structure interaction system whose solute evolves according to the convected time derivative of its extra stress tensor. A consequence of this is a weak-strong uniqueness result. 

%
\end{abstract}

\maketitle

\section{Introduction} 
As the name suggest, incompressible fluids are those fluids whose volume  cannot  contract or expand, and thus, are volume preserving.  Therefore,  to avoid cracks  or breakages, it is very important that any flexible receptacle containing such a fluid does not shrink in volume. 
Interpreted analytically,  the transformation of \textit{a region containing an incompressible fluids} (not the incompressible fluids itself) should be at least volume preserving in order to avoid a degeneracy.  
Now consider a well-defined injective mapping $\bm{\Psi}_\eta$ that maps a reference domain $\Omega$ to a current state $\Omega_\eta$.
Since the volume  of the current state $\Omega_\eta$ is given by
\begin{align*}
\vert\Omega_\eta\vert=\int_\Omega 
\vert\det(\nabx \bm{\Psi}_\eta)\vert \dx,
\end{align*}
we find that $\vert\Omega_\eta\vert=1$ if  
\begin{align*}
\vert\det(\nabx \bm{\Psi}_\eta)\vert =\frac{1}{\vert\Omega_\eta\vert}.
\end{align*}
This tells us that the transformation $ \bm{\Psi}_\eta$ shrinks or scale down each infinitesimal volume in $\Omega$ by a factor $\tfrac{1}{\vert\Omega_\eta\vert}$ in order to achieve the unit volume  $\vert\Omega_\eta\vert=1$. As explained earlier, such a shrinkage is inimical in the context of incompressible fluid-structure interaction. Thus, within the physical context of 
\begin{align*}
\vert\det(\nabx \bm{\Psi}_\eta)\vert \neq \frac{1}{\vert\Omega_\eta\vert},
\end{align*}
we consider a dilute corotational incompressible polymeric fluids of Oldroyd type. Such a fluid is a solvent-solute mixture whose solvent subcomponent has velocity field $\mathbf{u}:(t, \mathbf{x})\in I \times \Oeta \mapsto  \mathbf{u}(t, \mathbf{x}) \in \mathbb{R}^2$ defined on the spatially flexible spacetime cylinder
\begin{align*}
I \times \Oeta:=\bigcup_{t\in I}\{t\}\times \Omega_{\eta(t)}
\end{align*}
and satisfy the incompressibility condition
\begin{align*}
\divx\bu=0 
\end{align*}
in $I\times\Oeta$. 
The solute subcomponent is a `dumbbell' consisting of two massless monomers connected by a Hookean spring. The elongation vector $\bq\in\mathbb{R}^2$ between the position of these two monomers has size $|\bq|>0$ and the spring potential and Maxwellian are, respectively,
\begin{align*} 
U\Big(\frac{1}{2}|\bq|^2 \Big)=\frac{1}{2}|\bq|^2, \qquad\qquad M=\frac{\exp(- \tfrac{1}{2}|\bq|^2)}{\int_B\exp(- \tfrac{1}{2}|\bq|^2)\dq}.
\end{align*} 
If $\ell_0\ll\mathrm{diam}(\Oeta)$ is the characteristic microscopic length scale  of the characteristic dumbbell size, we can introduce the center-of-mass diffusion coefficient $\varepsilon\geq0$ of the dumbbell and the relaxation time $\lambda>0$ which characterizes the elastic relaxation property of the fluid. More precisely, these two parameters are given by
\begin{align}
\label{scales}
\varepsilon=\frac{\ell^2_0}{8\lambda}, \qquad \lambda=\frac{\zeta}{4H}
\end{align}  
where $\zeta$ is the friction coefficient and $H$ is the spring constant. With this, the mesoscopic description of the solute subcomponent is given by the probability density function  $$f :(t, \mathbf{x} ,\bq)\in I \times \Oeta\times \mathbb{R}^2  \mapsto f(t, \mathbf{x} ,\bq) \in \mathbb{R}_{\geq0}$$ which evolves according to the Fokker--Planck equation
\begin{align}
\label{fokkerPlanck}
\partial_t f+ (\mathbf{u}\cdot \nabx)f+\divq(\mathbb{W}(\nabx\bu)\bq f)=\varepsilon\Delx f+\frac{1}{2\lambda}\divq\left(M\nabq\left(\frac{f}{M}\right)\right)
\end{align}
in $I\times\Oeta\times \mathbb{R}^2$. Here,
\begin{align*}
\mathbb{W}(\nabx \bu)=\tfrac{1}{2}(\nabx \bu-(\nabx \bu)^\top)
\end{align*}
is the vorticity tensor of the solvent or the antisymmetric gradient of the solvent's velocity.
 To obtain a macroscopic description for the solute subcomponent from the mesoscopic description above, we consider a tensor-valued average and a scalar average given by the stress tensor $\widehat{\bT}:(t, \mathbf{x} )\in I \times \Oeta  \mapsto \widehat{\bT}(t, \mathbf{x} )\in\mathbb{R}^{2\times2}$ of the solute  and the polymer number density $\rho :(t, \mathbf{x} )\in I \times \Oeta  \mapsto \rho(t, \mathbf{x} ) \in \mathbb{R}$ which we assume to be a constant since only two monomers are at play. These averages are precisely given by
\begin{align*} 
\widehat{\bT}(t, \bx)= \int_{B} f(t,\bx,\bq)\bq\otimes\bq \dq,
\qquad
\rho(t, \bx)= \int_{B} f(t,\bx,\bq) \dq\equiv \mathrm{constant},
\end{align*}
respectively. If we now set 
\begin{align*}
\bT=\widehat{\bT}-k\rho\mathbb{I} 
\end{align*}  
where $k>0$ is the Boltzmann constant, we finally obtain the macroscopic description of the solute subcomponent
\begin{align}
\label{macroClosure1}
\partial_t \bT + (\mathbf{u}\cdot \nabx) \bT
=
\mathbb{W}(\nabx \bu)\bT + \bT\mathbb{W}((\nabx \bu)^\top) -\frac{1}{2\lambda}\bT+ \varepsilon\Delx  \bT 
\end{align}
in $I\times\Oeta $.

Further assumptions on the two parameters $\varepsilon\geq0$ and $\lambda>0$ leads to several variations of  \eqref{macroClosure1}. One such variation that is strongly contested in the literature is whether the center-mass-of diffusion parameter $\varepsilon$ should be zero or strictly positive. Indeed, as  explained in \cite{barrett2007existence}, when $L\approx 1$ is a characteristic macroscopic length scale, the authors in \cite{bhave1991kinetic} estimate the ratio
$\ell_0^2/L^2$ to be in the range of about $10^{-9}$ to $10^{-7}$. Although not zero, this is a very small number leading one school of thought to justify setting $\varepsilon=0$ in \eqref{fokkerPlanck} (and thus, also in \eqref{macroClosure1}). This gives the equation a damped hyperbolic structure and thus, present a relatively difficult equation to analyse mathematically. On the other hand, it is also perfectly justifiable to let $\varepsilon>0$ since the aforementioned ratio is not exactly zero. In this case, a parabolic structure is obtained and the presence of this diffusion term regularises the equation. 

One can also impose extra assumption on the second parameter $\lambda>0$. For example, if the friction coefficient $\zeta$ is several order of magnitude larger than the other parameters, the elastic relaxation parameter will also be very large and as such, one may assume that
$\tfrac{1}{2\lambda}\approx0$
leading to a purely elastic solute with infinite memory.
 
The last two terms in \eqref{macroClosure1} being zero gives the equation a purely hyperbolic structure with absolutely no form of dissipation. The resulting equation can be traced back to Oldroyd's seminal paper \cite{oldroyd1950formulation} in which he argue that to produce a frame-invariant equation, one needed to
evaluate the stress tensor   in such a way that it accounts for translation, rotation and even deformation. This lead to the convected time-derivative or what is now referred to as the Oldroyd derivative which conserves all moments just as the usual material (advected) derivative does. In particular, if we let $ \bU:(t, \mathbf{x} )\in I \times \Ozeta  \mapsto  \bU(t, \mathbf{x} )\in\mathbb{R}^{2\times2}$ be stress tensor 
for a dilute corotational elastic bead and spring dumbbells in a solvent with a velocity field $\bv:(t, \mathbf{x})\in I \times \Ozeta \mapsto  \bv(t, \mathbf{x}) \in \mathbb{R}^2$, then $\bU$
%
%
is governed by 
\begin{align} 
\label{oldroydDerivative}
\partial_t \bU + (\bv\cdot \nabx) \bU
=
\mathbb{W}(\nabx \bv)\bU + \bU\mathbb{W}((\nabx \bv)^\top)  
\end{align}
in $I\times\Ozeta $. 
We refer to the review \cite{hinch2021oldroyd} for more insight on Oldroyd's work. 

One of our goals will be the rigorous justification of the vanishing limit of the last two terms on the right-hand side of \eqref{macroClosure1} leading to  a solution of \eqref{oldroydDerivative}. To properly motivate this work, however, we begin by rigorously setting up the system.
\subsection{Setup}
We consider a spatial reference domain  $\Omega \subset \mathbb{R}^2$ whose boundary $\partial\Omega$ may consist of a flexible part $\omega \subset \mathbb{R}$ and a rigid part $\Gamma \subset\mathbb{R}$. However, because the analysis at the rigid part is significantly simpler, we shall identify the whole of $\partial \Omega$ with $\omega$ and endow it with periodic boundary conditions. Let $I:=(0,T)$ represent a time interval for a given constant $T>0$. 
We represent the time-dependent  displacement of the structure by $\eta:\overline{I}\times\omega\rightarrow(-L,L)$ where $L>0$ is a fixed length of the tubular neighbourhood of $\partial\Omega$ given by
\begin{align*}
S_L:=\{\bx\in \mathbb{R}^2\,:\, \mathrm{dist}(\bx,\partial\Omega
)<L \}.
\end{align*}
Now, for some $k\in\mathbb{N}$ large enough, we assume that $\partial\Omega$  is parametrized by an injective mapping $\bm{\varphi}\in C^k(\omega;\mathbb{R}^2)$ with $\naby \bm{\varphi}\neq0$ such that
\begin{align*}
\partial{\Omega_{\eta(t)}}=\big\{\bm{\varphi}_{\eta(t)}:=\bm{\varphi}(\by)+\bn(\by)\eta(t,\by)\, :\, t\in I, \by\in \omega\big\}.
\end{align*}
The set $\partial{\Omega_{\eta(t)}}$ represents the boundary of the flexible domain at any instant of time $t\in I$ and the vector $\bn(y)$ is a unit normal at the point $y\in \omega$. 
We also let $\bn_{\eta(t)}(y)$ be the corresponding normal of $\partial{\Omega_{\eta(t)}}$ at the spacetime point $y\in \omega$ and $t\in I$. Then for $L>0$ sufficiently small, $\bn_{\eta(t)}(y)$ is close to $\bn(y)$ and $\bm{\varphi}_{\eta(t)}$ is close to $\bm{\varphi}$. Since $\naby \bm{\varphi}\neq0$,  it will follow that
\begin{align*}
\naby \bm{\varphi}_{\eta(t)} \neq0 \quad\text{ and }\quad \bn(y)\cdot \bn_{\eta(t)}(y)\neq 0 
\end{align*}
for $y\in \omega$ and $t\in I$. Thus, in particular, there is no loss of strict positivity of the Jacobian determinant provided that $\Vert \eta\Vert_{L^\infty(I\times\omega)}<L$.

For the interior points, we  transform the  reference domain $\Omega$ into a time-dependent moving domain $\Omega_{\eta(t)}$  whose state at time $t\in\overline{I}$ is given by
\begin{align*}
\Omega_{\eta(t)}
 =\big\{
 \bm{\Psi}_{\eta(t)}(\bx):\, \bx \in \Omega 
  \big\}.
\end{align*}
Here,
\begin{align*}
\bm{\Psi}_{\eta(t)}(\bx)=
\begin{cases}
\bx+\bn(\by(\bx))\eta(t,\by(\bx))\phi(s(\bx))     & \quad \text{if } \mathrm{dist}(\bx,\partial\Omega)<L,\\
    \bx & \quad \text{elsewhere } 
  \end{cases}
\end{align*}
is the Hanzawa transform with inverse $\bm{\Psi}_{-\eta(t)}$ and where for a point $\bx$ in the neighbourhood of $\partial\Omega$, the vector $\bn(y(\bx))$ is the unit normal at the point $y(\bx)=\mathrm{arg min}_{y\in\omega}\vert\bx -\bm{\varphi}(y)\vert$. Also, $s(\bx)=(\bx-\bm{\varphi}(y(\bx)))\cdot\bn(y(\bx))$ and $\phi\in C^\infty(\mathbb{R})$ is a cut-off function that is $\phi\equiv0$ in the neighbourhood of $-L$ and $\phi\equiv1$ in the neighbourhood of $0$. Note that $\bm{\Psi}_{\eta(t)}(\bx)$ can be rewritten as
\begin{align*}
\bm{\Psi}_{\eta(t)}(\bx)=
\begin{cases}
\bm{\varphi}(y(\bx))+\bn(\by(\bx))[s(\bx)+\eta(t,\by(\bx))\phi(s(\bx)) ]    & \quad \text{if } \mathrm{dist}(\bx,\partial\Omega)<L,\\
    \bx & \quad \text{elsewhere. } 
  \end{cases}
\end{align*}
\\
With the above preparation in hand, we consider the corotational Oldroyd's model for the flow of a  dilute polymeric fluid interacting with a flexible structure.

For simplicity we set the length scale in \eqref{scales} to $\ell_0=2$ so that
\begin{align*}
\varepsilon=\frac{1}{2\lambda}.
\end{align*}
In so doing, $\lambda\rightarrow\infty$ if and only if $\varepsilon\rightarrow0$. We can thus perform a simultaneous limit of the diffusion and damping terms with an equivalent  choice of either a large relaxation time limit or a vanishing center-of-mass limit.
This simplification does not affect our work whatsoever and it fact, the same results holds true for any constant length scale $\ell_0>0$.

The unknown of our problem consists of a structure displacement function $\eta:(t, \by)\in I \times \omega \mapsto   \eta(t,\by)\in \mathbb{R}$, a fluid velocity field $\mathbf{u}:(t, \mathbf{x})\in I \times \Oeta \mapsto  \mathbf{u}(t, \mathbf{x}) \in \mathbb{R}^2$, a pressure function $p:(t, \mathbf{x})\in I \times \Oeta \mapsto  p(t, \mathbf{x}) \in \mathbb{R}$ and an extra stress tensor $\bT :(t, \mathbf{x} )\in I \times \Oeta  \mapsto \bT (t, \mathbf{x} ) \in \mathbb{R}^{2\times2}$
 such that the system of equations 
\begin{align}
\divx \bu=0, \label{divfree} 
\\
\partial_t \bu  + (\mathbf{u}\cdot \nabx)\mathbf{u} 
= 
 \nu\delx \bu -\nabx p 
+
\divx   \bT, \label{momEq}
\\
 \partial_t^2 \eta - \gamma\partial_t\partial_y^2 \eta +  \partial_y^4 \eta =  - ( \mathbb{S}\bn_\eta )\circ \bm{\varphi}_\eta\cdot\bn \,\det(\partial_y\bm{\varphi}_\eta), 
\label{shellEQ}
\\
\partial_t \bT + (\mathbf{u}\cdot \nabx) \bT
=
\mathbb{W}(\nabx \bu)\bT + \bT\mathbb{W}((\nabx \bu)^\top) - \varepsilon( 1- \Delx) \bT \label{solute}
\end{align}
holds on $I\times\Oeta\subset \mathbb R^{1+2}$ (with \eqref{shellEQ} posed on $I\times\omega\subset \mathbb R^{1+1}$) where
\begin{align*}
\mathbb{S}= \nu(\nabx \bu +(\nabx \bu)^\top) -p\mathbb{I}+  \bT,
\qquad
\mathbb{W}(\nabx \bu)=\tfrac{1}{2}(\nabx \bu-(\nabx \bu)^\top).
\end{align*}
The parameters $\nu$, $\gamma$ and $\varepsilon$ are positive constants, $\bn_\eta$ is the normal at $\partial\Oeta$ and $\mathbb{I}$ is the identity matrix.
We complement \eqref{divfree}--\eqref{solute} with the following initial and boundary conditions
\begin{align}
&\eta(0,\cdot)=\eta_0(\cdot), \qquad\partial_t\eta(0,\cdot)=\eta_\star(\cdot) & \text{in }\omega,
\\
&\bu(0,\cdot)=\bu_0(\cdot) & \text{in }\Omega_{\eta_0},
\\
&\bT(0,\cdot)=\bT_0(\cdot) &\text{in }\Omega_{\eta_0},
\label{initialCondSolv}
\\
&  
\bn_{\eta}\cdot\nabx\bT=0 &\text{on }I\times\partial\Omega_{\eta}.
\label{bddCondSolv}
\end{align}
Furthermore, we impose periodicity on the boundary of $\omega$ (with mean-zero elements in $\omega$) and the following interface condition
\begin{align} 
\label{interface}
&\bu\circ\bm{\varphi}_\eta=(\partial_t\eta)\bn & \text{on }I\times \omega
\end{align}
at the flexible part of the boundary with normal $\bn$.

The analysis of polymeric fluid-structure interaction problems was  initiated recently in \cite{breit2021incompressible} where the authors showed the existence of weak solutions to a dilute solute-solvent-structure mutually coupled system. This system consisted of the $3$-D noncorotational Fokker--Planck equation \eqref{fokkerPlanck} (where $\nabx\bu$ replaces $\mathbb{W}(\nabx\bu)$) for the mesoscopic description of the solute, the  $3$-D incompressible Navier--Stokes equation \eqref{divfree} and \eqref{momEq} giving the macroscopic description of the solvent, and with a  $2$-D structure modeled by a shell equation \eqref{shellEQ} of Koiter type (where the term $- \gamma\partial_t\partial_y^2 \eta +  \partial_y^4 \eta$ is replaced by the gradient of the so-called \textit{Koiter energy}). Uniqueness is unknown for this system but the solutions exist until potential degeneracies occur with the Koiter energy or with the structure deformation.
When the  $2$-D Koiter shell in \cite{breit2021incompressible} is replaced by the $2$-D viscoelastic shell equation \eqref{shellEQ}, the extension to the existence of a unique local-in-time strong solution was then shown in \cite{breit2023existence}. 
Note that for fixed spatial domains subject to periodic boundary conditions, one can construct solutions that are spatially more regular \cite{breit2021local}  than strong solutions. The corresponding result for the system with a structure displacement remains an interesting open problem even in lower dimensions.
 
In this paper, we analyse \eqref{divfree}--\eqref{interface} of Oldroyd type from two main point of views. To explain these, however, we first introduce the notion of a solution we are interested in.
%
%
%

\subsection{Concepts of solution and main results}
We begin this subsection with a precise definition of a 
 strong solutions of \eqref{divfree}--\eqref{interface}.
\begin{definition}[Strong solution]
\label{def:strongSolution}
Let $(\bT_0, \bu_0, \eta_0, \eta_\star)$
be a dataset that satisfies
\begin{equation}
\begin{aligned}
\label{mainDataForAllStrong}
&
\eta_0 \in W^{3,2}(\omega) \text{ with } \Vert \eta_0 \Vert_{L^\infty( \omega)} < L, \quad \eta_\star \in W^{1,2}(\omega),
\\&\bu_0 \in W^{1,2}_{\divx}(\Omega_{\eta_0} )\text{ is such that }\bu_0 \circ \bm{\varphi}_{\eta_0} =\eta_\star \bn \text{ on } \omega,
\\& 
\bT_0\in W^{1,2}(\Omega_{\eta_0}) 
\end{aligned}
\end{equation}
We call 
$(\eta, \bu, p, \bT)$ 
a \textit{strong solution} of   \eqref{divfree}--\eqref{interface} with dataset $(  \bT_0, \bu_0, \eta_0, \eta_\star)$  if:
\begin{itemize} 
\item the structure-function $\eta$ is such that $
\Vert \eta \Vert_{L^\infty(I \times \omega)} <L$ and
\begin{align*}
\eta \in &W^{1,\infty}\big(I;W^{1,2}(\omega)  \big)\cap L^{\infty}\big(I;W^{3,2}(\omega)  \big) \cap  W^{1,2}\big(I; W^{2,2}(\omega)  \big)
\\&\cap  W^{2,2}\big(I;L^{2}(\omega)  \big) \cap  L^{2}\big(I;W^{4,2}(\omega)  \big);
\end{align*}
\item the velocity $\bu$ is such that $\bu  \circ \bm{\varphi}_{\eta} =(\partial_t\eta)\bn$ on $I\times \omega$ and
\begin{align*} 
\bu\in  W^{1,2} \big(I; L^2_{\divx}(\Oeta ) \big)\cap L^2\big(I;W^{2,2}(\Oeta)  \big);
\end{align*}
\item the pressure $p$ is such that 
\begin{align*}
p\in L^2\big(I;W^{1,2}(\Oeta)  \big);
\end{align*}
\item the tensor $ \bT$ is such that 
\begin{align*}
\bT \in W^{1,2}\big(I;L^{2}(\Oeta)  \big) \cap L^\infty\big(I;W^{1,2}(\Oeta)  \big)\cap L^2\big(I;W^{2,2}(\Oeta)  \big);
\end{align*}
\item equations \eqref{divfree}--\eqref{solute} are satisfied a.e. in spacetime with $\eta(0)=\eta_0$ and $\partial_t\eta(0)=\eta_\star$ a.e. in $\omega$, as well as $\bfu(0)=\bfu_0$ and $\bT(0)=\bT_0$ a.e. in $\Omega_{\eta_0}$.
\end{itemize}
\end{definition} 
The existence of a unique strong solution of  \eqref{divfree}--\eqref{interface} in the sense of Definition \ref{def:strongSolution} has recently been shown in \cite{mensah2023weak} (see also \cite{mensah2024conditionally} for a conditional regularity result). With this solution in hand, our first goal is to study the rate at which the system \eqref{divfree}--\eqref{interface} attains equilibrium.
When working on domains without boundaries, one can use Fourier analysis to obtain pointwise bound for the Fourier transform of the velocity $\bu$. This bound then help control the viscous   term $\nu \Vert \nabx\bu\Vert_{L^2(\Omega)}$ leading to the equilibration estimate $ \Vert \bu(t)\Vert_{L^2(\Omega)}\leq c(1+t)^{-\alpha}$ for some $\alpha>0$. See for instant \cite{abdo2024long, kato1984strong, kajikiya19862, schonbek19852, schonbek1996decay, wiegner1987decay}.

When the solvent is driven by a trace-free velocity $\bu$ defined on a bounded domain $\Omega$, however, a direct application of the classical Poincar\'e inequality is used to control the viscous term $\nu \Vert \nabx\bu\Vert_{L^2(\Omega)}$ leading to the equilibration estimate $ \Vert \bu(t)\Vert_{L^2(\Omega)}\leq ce^{-c  t}$, see \cite[Theorem 5.1]{debiec2023corotational} and \cite{barrett2011existenceEquilibraI, barrett2012existenceEquilibraI}. 


A similar result in the context of fluid-structure interaction where the boundary of the domain evolves nontrivially  in time is unknown. This is our first task and the exact statement is the following:

\begin{theorem}
\label{thm:one}
Let $|\Oeta|\neq1$ and let $(\eta, \bu,   \bT)$ 
be a strong solution of   \eqref{divfree}--\eqref{interface}. Then the equilibration rates (until possible degeneracy)
\begin{align*}
&\Vert \bT(t)\Vert_{L^2(\Oeta)}\leq e^{- \varepsilon t}\Vert\bT_0\Vert_{L^2(\Omega_{\eta_0})},
\\&
\Vert \partial_t\eta(t)\Vert_{L^2(\omega)}\leq e^{- c_1\gamma t}\sqrt{
\Vert\eta_\star\Vert_{L^2(\omega)}^2+
\frac{1}{2\nu\varepsilon} (1-e^{-2\varepsilon t})
\Vert\bT_0\Vert_{L^2(\Omega_{\eta_0})}^2},
\\
&\Vert \bu(t) \Vert_{L^2(\Oeta)}
\leq 
e^{- \tfrac{c_2}{2}\nu\left(1-\left( \inf_{t\in I}\vert\Omega_\eta\vert\right)^{-1/2}\right)^2 t}
\sqrt{
\Vert \bu_0 \Vert_{L^2(\Omega_{\eta_0})}^2 
+
\frac{1}{2\nu\varepsilon}(1-e^{-2\varepsilon t})
\Vert  \bT_0 \Vert_{L^2(\Omega_{\eta_0})}^2 
}
\end{align*}
applies for any $t\in I$ where $c_1$ and $c_2$ are the Poincar\'e--Wirtinger constants on $\omega$ and $\Oeta$, respectively.
\end{theorem}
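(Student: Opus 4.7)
\medskip
\noindent\emph{Proof sketch.} My plan is to obtain the three estimates through separate but coupled energy arguments: first an isolated identity for $\bT$, then a combined identity for the pair $(\bu,\eta)$. The exponential decay of $\|\bT\|_{L^2(\Oeta)}$ from the first step feeds into the momentum--shell inequality as a forcing term that is already small, and a suitable Gronwall argument together with Poincar\'e-type inequalities then produces the remaining two bounds.

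For the $\bT$ estimate I would test \eqref{solute} against $\bT$ in $L^2(\Oeta)$. Because $\divx\bu=0$ and the boundary of $\Oeta$ moves with the fluid by \eqref{interface}, the Reynolds transport theorem converts the material derivative into $\tfrac{d}{dt}\|\bT\|_{L^2(\Oeta)}^2$, with the convective boundary contribution exactly canceling the boundary term generated by integrating $\bu\cdot\nabx\bT$ by parts. The corotational nonlinearity $\mathbb{W}(\nabx\bu)\bT+\bT\mathbb{W}((\nabx\bu)^\top)=\mathbb{W}\bT-\bT\mathbb{W}$ preserves the symmetry of $\bT$, and for any symmetric $\bT$ the cyclicity of the trace yields $\bT:(\mathbb{W}\bT-\bT\mathbb{W})=\tr(\mathbb{W}\bT^2)-\tr(\bT^2\mathbb{W})=0$ pointwise. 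Integration by parts on $\varepsilon\Delx\bT$ uses the Neumann condition \eqref{bddCondSolv} to kill the boundary term, leaving a nonpositive $-2\varepsilon\|\nabx\bT\|^2$ that I simply discard. Gronwall then gives $\|\bT(t)\|_{L^2(\Oeta)}^2\le e^{-2\varepsilon t}\|\bT_0\|_{L^2(\Omega_{\eta_0})}^2$.

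For the remaining two estimates I would test \eqref{momEq} against $\bu$ and \eqref{shellEQ} against $\partial_t\eta$, again using Reynolds transport on the moving fluid domain. The interface condition \eqref{interface} plays a double role: it absorbs the convective boundary contribution from the fluid equation, and it allows the boundary traction $\int_{\partial\Oeta}\bu\cdot(\mathbb{S}\bn_\eta)\,\mathrm{d}S$ produced by integrating the viscous, pressure, and extra-stress terms by parts to be identified, after change of variables, with the right-hand side of the shell equation tested by $\partial_t\eta$. What remains is the coupled identity
\[\tfrac{d}{dt}\Bigl(\tfrac{1}{2}\|\bu\|^2+\tfrac{1}{2}\|\partial_t\eta\|^2+\tfrac{1}{2}\|\partial_y^2\eta\|^2\Bigr)+\nu\|\nabx\bu\|^2+\gamma\|\partial_y\partial_t\eta\|^2=-\int_{\Oeta}\bT:\nabx\bu\,\dx.\]
Young's inequality $|\int\bT:\nabx\bu|\le\tfrac{1}{2\nu}\|\bT\|^2+\tfrac{\nu}{2}\|\nabx\bu\|^2$ absorbs half of the viscous dissipation. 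I then invoke the Poincar\'e--Wirtinger inequality on $\omega$ (where $\partial_t\eta$ is mean-zero by periodicity) to obtain $\|\partial_y\partial_t\eta\|^2\ge c_1\|\partial_t\eta\|^2$, and a Poincar\'e-type inequality for divergence-free fields on $\Oeta$ with the kinematic trace \eqref{interface} to obtain $\|\nabx\bu\|^2\ge c_2(1-|\Oeta|^{-1/2})^2\|\bu\|^2$. Substituting the decay $\|\bT(s)\|^2\le e^{-2\varepsilon s}\|\bT_0\|^2$ into the forcing and applying Gronwall's lemma separately to the differential inequalities for $\|\partial_t\eta\|^2$ and $\|\bu\|^2$ delivers the remaining two bounds.

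The main technical obstacle is the Poincar\'e-type inequality for $\bu$ on the moving domain $\Oeta$. The nonhomogeneous interface trace $\bu\circ\bm{\varphi}_\eta=(\partial_t\eta)\bn$ prevents a direct Poincar\'e estimate, so one must decompose $\bu$ into a divergence-preserving extension of its boundary values and a zero-trace correction; controlling the extension via a Bogovski\u{\i}-type operator together with the compatibility condition $\int_{\partial\Oeta}\bu\cdot\bn_\eta\,\mathrm{d}S=0$ produces a coercivity constant that depends on the volume of $\Oeta$ and degenerates as $|\Oeta|\to 1$. This is precisely what forces the hypothesis $|\Oeta|\ne 1$ in the theorem and yields the geometric factor $(1-|\Oeta|^{-1/2})^2$ in the exponential rate.
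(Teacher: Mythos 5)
Your overall strategy --- the $L^2$ energy identity for \eqref{solute} with the corotational cancellation, the coupled energy identity for \eqref{momEq}--\eqref{shellEQ} tested with $(\bu,\partial_t\eta)$, Young's inequality, Poincar\'e--Wirtinger on $\omega$, and feeding the decay of $\bT$ into the forcing before Gr\"onwall --- is the same as the paper's. But two steps do not hold up as written. First, in the $\bT$ estimate the exponential rate comes from the zeroth-order damping part of $-\varepsilon(1-\Delx)\bT$: testing with $\bT$ produces $-\varepsilon\Vert\bT\Vert_{L^2(\Oeta)}^2-\varepsilon\Vert\nabx\bT\Vert_{L^2(\Oeta)}^2$, and one keeps the first term and discards only the gradient term. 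You account only for the Laplacian and then discard it, so the differential inequality you are left with has no dissipative term and Gr\"onwall would give merely $\Vert\bT(t)\Vert_{L^2(\Oeta)}\le\Vert\bT_0\Vert_{L^2(\Omega_{\eta_0})}$; you must retain $-2\varepsilon\Vert\bT\Vert_{L^2(\Oeta)}^2$ to obtain the rate $e^{-2\varepsilon t}$.

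Second, and more seriously, the coercivity $\Vert\nabx\bu\Vert_{L^2(\Oeta)}^2\ge c_2\big(1-\vert\Oeta\vert^{-1/2}\big)^2\Vert\bu\Vert_{L^2(\Oeta)}^2$ is the crux of the velocity bound, and your proposed route to it does not work. The paper obtains it in an elementary way that makes no use of the interface trace: Poincar\'e--Wirtinger gives $\Vert\nabx\bu\Vert_{L^2(\Oeta)}^2\ge c_2\Vert\bu-\bu_{\Oeta}\Vert_{L^2(\Oeta)}^2$ with $\bu_{\Oeta}$ the spatial mean, the reverse triangle inequality gives $\Vert\bu-\bu_{\Oeta}\Vert\ge\Vert\bu\Vert-\Vert\bu_{\Oeta}\Vert$, and a Jensen-type bound $\Vert\bu_{\Oeta}\Vert_{L^2(\Oeta)}\le\vert\Oeta\vert^{-1/2}\Vert\bu\Vert_{L^2(\Oeta)}$ then yields $\big(1-(\inf_{t\in I}\vert\Oeta\vert)^{-1/2}\big)\Vert\bu\Vert\le\Vert\bu\Vert-\Vert\bu_{\Oeta}\Vert$; the hypothesis $\vert\Oeta\vert\ne1$ is needed only so that this prefactor does not vanish. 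Your route --- decomposing $\bu$ into a divergence-preserving (Bogovski\u{\i}-type) extension of its boundary values plus a zero-trace correction and claiming the resulting constant degenerates as $\vert\Oeta\vert\to1$ --- is unsubstantiated: the constants of Bogovski\u{\i} and extension operators depend on the Lipschitz geometry of $\Oeta$, not on whether its measure equals one, so nothing in that construction produces the factor $\big(1-\vert\Oeta\vert^{-1/2}\big)^2$ or explains the role of $\vert\Oeta\vert\ne1$. As it stands the key inequality is asserted rather than proved, which is a genuine gap; replacing that step by the mean-subtraction argument above repairs it, and the remainder of your sketch (splitting the combined inequality, inserting $\Vert\bT(s)\Vert^2\le e^{-2\varepsilon s}\Vert\bT_0\Vert^2$, and applying Gr\"onwall separately) then matches the paper.
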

\begin{remark}
The difference in the rate for $\bu$ (i.e. the extra term $-|\Oeta|^{-1/2}$) compared to that established in \cite[Theorem 5.1]{debiec2023corotational} comes from the fact that the velocity field we consider does not have zero-trace. Indeed, since $\bu$ satisfies $\bu  \circ \bm{\varphi}_{\eta} =(\partial_t\eta)\bn$ on $\omega$, we obtain the extra rate for the shell velocity $\partial_t\eta$.
\end{remark}

\begin{remark}
It will become clear in the proof of Theorem \ref{thm:one} that these rates actually hold in the larger class of weak solutions, as defined in \cite{mensah2023weak}, by working on the Galerkin level and then passing to the limit to get the corresponding continuum estimates.
\end{remark}

In the mathematical analysis of polymeric fluids,  there are strong opinions on whether or not the center-of-mass diffusion terms (the terms with the $\varepsilon$ parameters) in \eqref{divfree}--\eqref{solute} should be added. A school of thought \cite{barrett2017existenceOldroyd, degond2009kinetic, schieber2006generalized} gives justifications for its inclusion while some others \cite{guillope1990existence, lin2005on} ignore it. The rigorous derivation (see for example \cite{suli2018mckean}) from the microscopic scale leads to the center-of-mass diffusion terms in the mesoscopic description \eqref{fokkerPlanck} and thus persist on the macroscopic level \eqref{solute}. However, the importance of this stress diffusion is also known to decrease as the length scale of the problem increases \cite{bhave1991kinetic}.  
Determining whether the solutions of a non-diffusive system can be obtained as limits of those from a corresponding diffusive system is a question of significant mathematical and practical relevance, see for instant  \cite[Page 486]{masmoudi2013global}
and \cite[Page 336]{debiec2023corotational}. Unfortunately, results of this kind are incredibly rare in the literature. That notwithstanding,  numerical experiments \cite{chupin2015stationary} suggests that  such a convergence hold at order $1$ in the $L^2_\bx$- and $W^{1,2}_\bx$-norms.

From earlier works \cite{breit2021incompressible, breit2023existence} on the Navier--Stokes--Fokker--Planck equation, a suggestion arose that a strong solution (which typically exists locally in time, cf. \cite{breit2023existence}) may converge to a weak solution (which will ideally exist globally in time barring any degeneracies, cf. \cite{breit2021incompressible}) as the center-of-mass diffusion parameter goes to zero. We, however, note that the notion of a strong solution, as constructed in \cite{breit2023existence}, only applies in spacetime and that the solution is interpreted weakly in the third independent variable, i.e. concerning the conformation vector. On the other hand, the strong solution of  \eqref{divfree}--\eqref{interface} constructed in \cite{mensah2023weak} does not have this mixed character since it is fully macroscopic with just a spacetime dependency. With the above discussion in hand,
we recently showed in \cite{mensah2025vanishing} that the system \eqref{divfree}--\eqref{solute}  with solute diffusion and damping  converges in $L^2_\bx$-norm to a corresponding system \textit{without} diffusion but \textit{with} damping in its solute subcomponent.   This confirmed the earlier numerical results \cite{chupin2015stationary} that suggested that  the diffusive model converges to the solution of the non-diffusive model at order 1 in the $L^2_\bx$- and $W^{1,2}_\bx$-norms. We have since learnt of another recent independent work \cite{wang2024} on the upper half plane that studies a similar problem to \cite{mensah2025vanishing}. In \cite{wang2024}, however, the full velocity gradient in the solute is considered but without a structure equation and the convergence hold in the $L^\infty_\bx$-norm.

Our second goal in this work is to show that the system \eqref{divfree}--\eqref{solute} which models a polymeric fluid of Oldroyd type with solute diffusion and damping that is interacting with a viscoelastic shell converges to the corotational polymeric fluid model of Oldroyd type \textit{without any form of diffusion and damping} that is also interacting with a viscoelastic shell. The latter model is described by the following system of equations
\begin{align}
\divx \bv=0, \label{divfreeL} 
\\
\partial_t \bv  + (\bv\cdot \nabx)\bv
= 
  \nu\delx \bv -\nabx \pi 
+ 
\divx   \bU, \label{momEqL}
\\
 \partial_t^2 \zeta - \gamma\partial_t\partial_y^2 \zeta +\partial_y^4 \zeta =  - ( \mathbb{S}_*\bn_\zeta )\circ \bm{\varphi}_\zeta\cdot\bn \,\det(\partial_y\bm{\varphi}_\zeta), 
\label{shellEQL}
\\
\partial_t \bU + (\bv\cdot \nabx) \bU
=
\mathbb{W}(\nabx \bv)\bU + \bU\mathbb{W}((\nabx \bv)^\top)   \label{soluteL}
\end{align}
which holds on $I\times\Ozeta\subset \mathbb R^{1+2}$(with \eqref{shellEQL} posed on $I\times\omega\subset \mathbb R^{1+1}$)  where
\begin{align*}
\mathbb{S}_*= (\nabx \bv +(\nabx \bv)^\top) -\pi\mathbb{I}+  \bU,
\qquad
\mathbb{W}(\nabx \bv)=\tfrac{1}{2}(\nabx \bv-(\nabx \bv)^\top).
\end{align*}
We complement \eqref{divfreeL}--\eqref{soluteL} with the following initial conditions
\begin{align}
&\zeta(0,\cdot)=\zeta_0(\cdot), \qquad\partial_t\zeta(0,\cdot)=\zeta_\star(\cdot) & \text{in }\omega,
\\
&\bv(0,\cdot)=\bv_0(\cdot) & \text{in }\Omega_{\zeta_0},
\\
&\bU(0,\cdot)=\bU_0(\cdot) &\text{in }\Omega_{\zeta_0},
\label{initialCondSolvL}  
\end{align}
and again, we impose periodicity on the boundary of $\omega$ and the following interface condition
\begin{align} 
\label{interfaceL}
&\bv\circ\bm{\varphi}_\zeta=(\partial_t\zeta)\bn & \text{on }I\times \omega
\end{align}
at the flexible part of the boundary with normal $\bn$. 
A similar system was analysed in \cite[(1.13)-(1.17)]{mensah2025vanishing} with an extra equation for the evolution of the polymer number density $\sigma$ and with an additional damping term $2(\bU-\sigma\mathbb{I})$ in the equation for $\bU$. For \textit{dilute} polymers, monomer-to-monomer interactions are negligible and thus it is reasonable to assume $\sigma$ is a constant. In this case, there is no evolutionary equation for $\sigma$ and $\bU=\widehat{\bU}-\sigma\mathbb{I}$ satisfy \eqref{soluteL}, where
\begin{align*} 
\widehat{\bU}(t, \bx)= \int_{B} f(t,\bx,\bq)\bq\otimes\bq \dq, \qquad \varepsilon=\frac{1}{2\lambda}=0. 
\end{align*}
Moreover, from the purely analytical point of view, the equation for $\sigma$ which is a homogeneous transport equation does not impose any analytical difficulty that is not already captured by the equation for $\bU$. As a result, many analysts, see for instant \cite{chupin2015stationary, hinch2021oldroyd, wang2024}, elect to study the system without $\sigma$. Putting this minor difference aside, the main difference between \eqref{divfreeL}-\eqref{soluteL} and the limit system analysed in \cite[(1.13)-(1.17)]{mensah2025vanishing} is that the former does not have the damping term $2(\bU-\sigma\mathbb{I})$ (or just $2\bU$ when the polymer number density is a constant) present in the latter. This term is a hyperbolic dissipation term that regularises the equation for the solute (without needing a more regular initial condition) from the mathematical point of view. From the physical point of view, this captures the system with no localised damping in the solute.

Furthermore, the purely hyperbolic equation \eqref{soluteL} has `truly' infinite memory since its  stress never relaxes. A consequence of this is that there is no exponential decay of stress and thus, Theorem \ref{thm:one} or any variant of it is completely out of reach. Nevertheless, as pointed out earlier, our second main result will study the singular limit of the system \eqref{divfree}-\eqref{solute} with finite memory to the system \eqref{divfreeL}-\eqref{soluteL} with this truly infinite memory. It is worth noting that although the limit system in \cite[(1.13)-(1.17)]{mensah2025vanishing} also has infinite memory, this is weaker in the sense that it's memory is exponentially fading but nonzero for all past times. 
%
Finally, we also note that,  whereas \cite[(1.13)-(1.17)]{mensah2025vanishing}  is analytically delicate yet manageable to handle, the former is extremely hard and becomes comparable in difficulty to the 3-D Euler equations when the full velocity gradient is considered in the solute equation (rather than the anti-symmetric gradient).

We are interested in weak solutions of \eqref{divfreeL}--\eqref{interfaceL} where weak solutions are defined as follows:
\begin{definition}[Weak solution]
\label{def:weaksolmartFP}
Let $( \bU_0, \bv_0, \zeta_0, \zeta_\star)$
be a dataset that satisfies
\begin{equation}
\begin{aligned}
\label{mainDataForAll}
&
\zeta_0 \in W^{2,2}(\omega) \text{ with } \Vert \zeta_0 \Vert_{L^\infty( \omega)} < L, \quad \zeta_\star \in L^{2}(\omega),
\\&\bv_0 \in L^{2}_{\divx}(\Omega_{\zeta_0} )\text{ is such that }\bv_0 \circ \bm{\varphi}_{\zeta_0} =\zeta_\star \bn \text{ on } \omega,
\\& 
\bU_0\in L^{2}(\Omega_{\zeta_0}).
\end{aligned}
\end{equation}
We call
$(\zeta, \bv,  \bU)$  
a \textit{weak solution} of   \eqref{divfreeL}--\eqref{interfaceL} with dataset $( \bU_0, \bv_0, \zeta_0, \zeta_\star)$  if: 
\begin{itemize}
\item the following properties 
\begin{align*}
&\zeta\in  W^{1,\infty}\big(I;L^{2}(\omega)  \big) \cap W^{1,2}(I;W^{1,2}(\omega))  \cap L^{\infty}\big(I;W^{2,2}(\omega)  \big),
\\&
\Vert\zeta\Vert_{L^\infty(I\times\omega)}<L,
\\
&\bv\in
L^{\infty} \big(I; L^{2}(\Ozeta) \big)\cap L^2\big(I;W^{1,2}_{\divx}(\Ozeta)  \big), 
\\
&
\bU \in   L^{\infty}\big(I;L^{2}(\Ozeta)  \big)  
\end{align*}
holds; 
\item for all  $  \mathbb{Y}  \in C^\infty (\overline{I}\times \R^2  )$, we have
\begin{equation}
\begin{aligned} 
\label{weakFokkerPlanckEq}
\int_I  \frac{\mathrm{d}}{\dt}
\int_{\Ozeta } \bU:\mathbb{Y} \dx \dt 
&=
\int_I
\int_{\Omega_{\zeta }}[\bU :\partial_t\mathbb{Y} + \bU:(\bv\cdot\nabx) \mathbb{Y}] \dx\dt
\\&+
\int_I\int_{\Ozeta}
[(\nabx \bv )\bU  + \bU (\nabx \bv )^\top ]:\mathbb{Y} \dx\dt;
\end{aligned}
\end{equation}
\item for all  $(\bm{\phi},\phi)  \in C^\infty_{\divx} (\overline{I}\times \R^2  )\otimes  C^\infty (\overline{I}\times \omega )$ with $\bm{\phi}(T,\cdot)=0$, $\phi(T,\cdot)=0$ and $\bm{\phi}\circ \bm{\varphi}_\zeta= \phi \bn$, we have
\begin{equation}
\begin{aligned}
\label{weakFluidStrut}
\int_I  \frac{\mathrm{d}}{\dt}\bigg(
\int_{\Ozeta } \bv\cdot\bm{\phi} \dx 
+
\int_\omega\partial_t\zeta\phi\dy\bigg) \dt 
&=
\int_I
\int_{\Ozeta }[\bv \cdot\partial_t\bm{\phi} + \bv\cdot(\bv\cdot\nabx) \bm{\phi}] \dx\dt
\\&-
\int_I\int_{\Ozeta }\big[\nu\nabx \bv  :\nabx \bm{\phi} +\bU:\nabx\bm{\phi}\big] \dx\dt
\\&
+
\int_I\int_\omega\big[\partial_t\zeta\partial_t\phi 
-
\gamma
\partial_t\naby\zeta\naby\phi 
-
\naby^2\zeta\naby^2\phi \big]\dy\dt;
\end{aligned}
\end{equation} 
\end{itemize}
\end{definition}

 
A superficial disadvantage of the system  \eqref{divfreeL}-\eqref{soluteL} without any form of diffusion or damping is that the regularisation effect due to those two terms is lost. Consequently, the solute component $ \bU$ of its weak solution is less regular than the 
solute component $\bT$ of the weak solution of
\eqref{divfree}-\eqref{solute}, c.f. \cite{mensah2023weak}. Nevertheless, because  $\bU$ possesses transport and corotational properties, any weak solution automatically regularises in the sense that it becomes essentially bounded in both space and time provided the initial condition is essentially  bounded in space. Indeed, let $(\zeta, \bv,  \bU)$  
be a weak solution of   \eqref{divfreeL}--\eqref{interfaceL} with dataset $( \bU_0, \bv_0, \zeta_0, \zeta_\star)$ satisfying \eqref{mainDataForAll} and $ \bU_0\in L^\infty(\Omega_{\zeta_0})$.
Although Equation \eqref{soluteL} looks complicated, we can show that it conserves all $L^p$-norms uniformly in $p\in [1,\infty)$ and thus, it is  essentially bounded in spacetime. To see the formal details, we test   \eqref{soluteL} with $\bU^{q-1}$, $q\in[1,\infty)$ and use Proposition \ref{prop:zeroCorotational}. We obtain
\begin{align}\label{conserveSolute}
\Vert \bU(t)\Vert_{L^q(\Omega_{\zeta})}^q
=
\Vert \bU_0\Vert_{L^q(\Omega_{\zeta_0})}^q 
\end{align}
uniformly in $q\in[1,\infty)$. So, in particular, we obtain $\bU \in L^\infty(I;L^\infty(\Omega_{\zeta}))$ since $ \bU_0\in L^\infty(\Omega_{\zeta_0})$ by assumption. 

The discussion above motivates a finer notion (for the solute component) of a weak solution of   \eqref{divfreeL}--\eqref{interfaceL}. We shall refer to this as an \textit{essentially bounded weak solution} whose precise definition is given as follows:
\begin{definition}[Essentially bounded weak solution]
\label{def:weaksolmartFPBounded}
Let $(  \bU_0, \bv_0, \zeta_0, \zeta_\star)$
be a dataset that satisfies \eqref{mainDataForAll} and
\begin{equation}
\begin{aligned}
\label{mainDataForAllBounded}
 \bU_0\in L^{\infty}(\Omega_{\zeta_0}).
\end{aligned}
\end{equation}
We call
$(\zeta, \bv,  \bU)$  
an \textit{essentially bounded weak solution} of   \eqref{divfreeL}--\eqref{interfaceL} with dataset $(   \bU_0, \bv_0, \zeta_0, \zeta_\star)$  if:
\begin{itemize}
\item $(\zeta, \bv, \bU)$  
is a  weak solution of   \eqref{divfreeL}--\eqref{interfaceL} with dataset $( \bU_0, \bv_0, \zeta_0, \zeta_\star)$;
\item the pair $ \bU $ satisfies $
 \bU \in L^\infty(I;L^\infty(\Omega_{\zeta})).
$
\end{itemize}
\end{definition}  
With Definition \ref{def:weaksolmartFPBounded} and the discussion preceding it in hand, the following theorem on the existence of an essentially bounded weak solution of   \eqref{divfreeL}--\eqref{interfaceL} is immediate and follows a similar argument as in \cite[Theorem 1.4]{mensah2025vanishing}.
\begin{theorem}
\label{thm:main} 
For a dataset $( \bU_0, \bv_0, \zeta_0, \zeta_\star)$ satisfying \eqref{mainDataForAll}
with $\bU_0$ satisfying \eqref{mainDataForAllBounded}, there exists an essentially bounded weak solution $(\zeta,\bv,  \bU)$  of \eqref{divfreeL}-\eqref{interfaceL}.
\end{theorem}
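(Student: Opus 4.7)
The plan is to obtain the essentially bounded weak solution as the vanishing limit $\varepsilon \to 0$ of strong solutions to the diffusive-damped system \eqref{divfree}--\eqref{interface}. For each $\varepsilon>0$, the existence of such a strong solution is guaranteed by \cite{mensah2023weak}, provided the initial data meet \eqref{mainDataForAllStrong}. Since the target dataset $(\bU_0,\bv_0,\zeta_0,\zeta_\star)$ only satisfies \eqref{mainDataForAll}--\eqref{mainDataForAllBounded}, I would first regularise it by convolution (with reflection/Hanzawa lifting near $\partial\Omega_{\zeta_0}$) to construct an $\varepsilon$-indexed family $(\bT_0^\varepsilon,\bu_0^\varepsilon,\eta_0^\varepsilon,\eta_\star^\varepsilon)$ satisfying \eqref{mainDataForAllStrong}, compatibility $\bu_0^\varepsilon\circ\bm{\varphi}_{\eta_0^\varepsilon}=\eta_\star^\varepsilon\bn$, and
\[
\bT_0^\varepsilon\to\bU_0 \text{ in } L^2,\quad \|\bT_0^\varepsilon\|_{L^\infty}\le \|\bU_0\|_{L^\infty},\quad \bu_0^\varepsilon\to\bv_0 \text{ in } L^2,\quad \eta_0^\varepsilon\to\zeta_0, \quad \eta_\star^\varepsilon\to\zeta_\star.
\]
Denote the resulting strong solutions by $(\eta^\varepsilon,\bu^\varepsilon,p^\varepsilon,\bT^\varepsilon)$.

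Next, I would derive $\varepsilon$-uniform estimates. Standard energy estimates (of the type underlying Theorem \ref{thm:one}) furnish uniform bounds
\[
\sup_\varepsilon\Big(\|\bu^\varepsilon\|_{L^\infty_tL^2_\bx\cap L^2_tW^{1,2}_\bx}+\|\partial_t\eta^\varepsilon\|_{L^\infty_tL^2_y\cap L^2_tW^{1,2}_y}+\|\eta^\varepsilon\|_{L^\infty_tW^{2,2}_y}+\|\bT^\varepsilon\|_{L^\infty_tL^2_\bx}\Big)<\infty,
\]
while testing \eqref{solute} with $|\bT^\varepsilon|^{q-2}\bT^\varepsilon$ and exploiting that the corotational terms $\mathbb{W}\bT+\bT\mathbb{W}^\top$ vanish against such a test (Proposition \ref{prop:zeroCorotational}) together with positivity of $\varepsilon(1-\Delta_\bx)$ — the boundary condition \eqref{bddCondSolv} kills the boundary term — yields, for every $q\in[1,\infty)$,
\[
\|\bT^\varepsilon(t)\|_{L^q(\Omega_{\eta^\varepsilon})}\le e^{-\varepsilon t/q}\|\bT_0^\varepsilon\|_{L^q(\Omega_{\eta_0^\varepsilon})}\le \|\bU_0\|_{L^\infty(\Omega_{\zeta_0})}\,|\Omega_{\eta_0^\varepsilon}|^{1/q}.
\]
Sending $q\to\infty$ delivers the crucial bound $\|\bT^\varepsilon\|_{L^\infty_{t,\bx}}\le\|\bU_0\|_{L^\infty}$ uniformly in $\varepsilon$.

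I would then pass to the limit $\varepsilon\to0$ on the fixed reference domain $\Omega$ via the Hanzawa transform $\bm{\Psi}_{\eta^\varepsilon}$, following the blueprint of \cite[Theorem 1.4]{mensah2025vanishing}. Compactness of $\eta^\varepsilon$ via Aubin--Lions in $C^0_tC^1_y$ ensures $\Omega_{\eta^\varepsilon}\to\Omega_\zeta$ in a Hausdorff sense and that the transforms converge smoothly; strong $L^2_{t,\bx}$ convergence of $\bu^\varepsilon$ on the moving domain is then obtained from the momentum equation via a Lions--Aubin argument adapted to the variable geometry (using the uniform velocity bounds and estimates on $\partial_t\bu^\varepsilon$ in a suitable negative-order space that accounts for the pressure and divergence-free constraint). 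The key new difficulty compared with the fixed-domain theory is that the solute tensor $\bT^\varepsilon$ has no available compactness beyond weak-$\ast$ $L^\infty_{t,\bx}$ convergence; however, the corotational nonlinearity $\mathbb{W}(\nabx\bu^\varepsilon)\bT^\varepsilon+\bT^\varepsilon\mathbb{W}(\nabx\bu^\varepsilon)^\top$ is only linear in $\bT^\varepsilon$, so the strong $L^2$ convergence of $\nabx\bu^\varepsilon$ suffices to identify the limit. Similarly $\divx\bT^\varepsilon\to\divx\bU$ in the sense of distributions in \eqref{momEq}, while the damping and diffusion terms $-\varepsilon(1-\Delta_\bx)\bT^\varepsilon$ vanish using the $\varepsilon$-uniform bounds on $\sqrt{\varepsilon}\,\nabx\bT^\varepsilon$ provided by the basic energy equality, and $\varepsilon\bT^\varepsilon\to0$ in $L^1$.

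The main obstacle I anticipate is the compactness and identification on the moving domain: matching the interface condition \eqref{interfaceL} in the limit and ensuring test functions $(\bm{\phi},\phi)$ with $\bm{\phi}\circ\bm{\varphi}_{\eta^\varepsilon}=\phi\bn$ can be transferred to the limit geometry requires the standard Lagrangian-pullback / extension construction from the fluid--structure literature. Once strong convergence of $\bu^\varepsilon$ in $L^2_{t,\bx}$ on the varying domains is secured, passing to the limit in \eqref{weakFokkerPlanckEq}--\eqref{weakFluidStrut} is routine, and the preservation of the $L^\infty$ bound under weak-$\ast$ lower semicontinuity yields $\bU\in L^\infty(I;L^\infty(\Omega_\zeta))$, completing the proof.
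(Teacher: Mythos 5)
Your overall strategy (approximate the data, solve the diffusive--damped system \eqref{divfree}--\eqref{interface} for each $\varepsilon$, prove $\varepsilon$-uniform energy and $L^q$ bounds, pass to the limit, and recover $\bU\in L^\infty_{t,\bx}$ by lower semicontinuity) is in the same spirit as the paper, which disposes of Theorem \ref{thm:main} by invoking the construction of \cite[Theorem 1.4]{mensah2025vanishing} together with the $L^q$-conservation observation \eqref{conserveSolute}; your uniform $L^q$/$L^\infty$ estimate for $\bT^\varepsilon$ (corotational terms vanish against $|\bT^\varepsilon|^{q-2}\bT^\varepsilon$, diffusion and damping have a sign, interface condition cancels the moving-boundary terms) is sound and is exactly the mechanism behind \eqref{conserveSolute}.

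However, there is a genuine gap at the heart of your limit passage: the identification of the corotational term. In \eqref{weakFokkerPlanckEq} you must pass to the limit in $\int\mathbb{W}(\nabx\bu^\varepsilon)\bT^\varepsilon:\mathbb{Y}+\bT^\varepsilon\mathbb{W}((\nabx\bu^\varepsilon)^\top):\mathbb{Y}$, a product of $\nabx\bu^\varepsilon$, which your uniform bounds give only \emph{weakly} in $L^2_{t,\bx}$, with $\bT^\varepsilon$, which converges only weakly-$\ast$ in $L^\infty_{t,\bx}$ (there is no $\varepsilon$-uniform spatial compactness for $\bT^\varepsilon$, since $\nabx\bT^\varepsilon$ is controlled only with a factor $\sqrt{\varepsilon}$). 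You assert ``the strong $L^2$ convergence of $\nabx\bu^\varepsilon$ suffices'', but Aubin--Lions, as you set it up, yields strong convergence of $\bu^\varepsilon$ itself, not of its gradient; strong $L^2_{t,\bx}$ convergence of $\nabx\bu^\varepsilon$ does not follow from the energy bounds and is precisely the delicate point -- in the paper it appears only as a \emph{conclusion} of the relative-energy estimate of Proposition \ref{prop:main}, which presupposes that the limiting weak solution already exists and therefore cannot be used in your construction. Without strong convergence of one factor the product of these two weak limits cannot be identified, so as written the scheme does not close; the argument the paper points to avoids this by a relative-energy/Cauchy-type comparison of the approximations rather than pure weak compactness. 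A secondary, fixable issue: you build the approximations from \emph{strong} solutions of \eqref{divfree}--\eqref{interface}, whose existence interval is not known to be uniform in $\varepsilon$ (and may deteriorate as your mollified data lose uniform higher-order bounds); since you only use energy-level estimates, you should instead work with the global-up-to-degeneracy \emph{weak} solutions of the diffusive system from \cite{mensah2023weak}, or else justify a uniform lifespan.
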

Our second main result will now seek to establish a relationship between strong solutions of \eqref{divfree}-\eqref{interface}  whose solution has finite memory 
 and essentially bounded weak solutions of   \eqref{divfreeL}--\eqref{interfaceL}  with truly infinite memory.  The precise statement is given as follows.
\begin{theorem}
\label{thm:main2}
Let $( \bU_0, \bv_0, \zeta_0, \zeta_\star)$ be a dataset satisfying \eqref{mainDataForAll} and \eqref{mainDataForAllBounded} and let $( {\bT}^\varepsilon_0, {\bu}^\varepsilon_0, \eta^\varepsilon_0, \eta^\varepsilon_\star)_{\varepsilon>0}$ be datasets satisfying \eqref{mainDataForAllStrong} and
\begin{equation}
\begin{aligned}
\label{dataConv}
&\eta^\varepsilon_0 \rightarrow \zeta_0 \quad\text{in}\quad W^{2,2}( \omega), 
\\&\eta^\varepsilon_\star \rightarrow \zeta_\star \quad\text{in}\quad L^\infty(I;L^{2}( \omega)), 
\\&\bm{1}_{\Omega_{\eta^\varepsilon_0}} \bu^\varepsilon_0  \rightarrow \bm{1}_{\Omega_{\zeta_0}}\bv_0 \quad\text{in}\quad L^2(\Omega \cup S_\ell),   
\\&\bm{1}_{\Omega_{\eta^\varepsilon_0}}
\bT^\varepsilon_0  \rightarrow \bm{1}_{\Omega_{\zeta_0}}\bU_0 \quad\text{in}\quad L^2(\Omega \cup S_\ell)
\end{aligned}
\end{equation}
as $\varepsilon\rightarrow0$.  
Now let $(\eta^\varepsilon, {\bu}^\varepsilon, {p}^\varepsilon,  {\bT}^\varepsilon)_{\varepsilon>0}$ be a collection of strong solutions of \eqref{divfree}-\eqref{interface} with dataset $(  {\bT}^\varepsilon_0, {\bu}^\varepsilon_0, \eta^\varepsilon_0, \eta^\varepsilon_\star)_{\varepsilon>0}$ and let $(\zeta,\bv,  \bU)$ be  an essentially bounded weak solution of \eqref{divfreeL}-\eqref{interfaceL} with dataset $(  \bU_0, \bv_0, \zeta_0, \zeta_\star)$. Then the following convergences
\begin{align*}
&\eta^\varepsilon \rightarrow \zeta \quad\text{in}\quad L^\infty(I;W^{2,2}( \omega)), 
\\&\partial_t\eta^\varepsilon \rightarrow \partial_t\zeta \quad\text{in}\quad L^\infty(I;L^{2}( \omega)),
\\&\partial_t\eta^\varepsilon \rightarrow \partial_t\zeta \quad\text{in}\quad L^2(I;W^{1,2}( \omega)),
\\&\bm{1}_{\Omega_{\eta^\varepsilon}} \bu^\varepsilon  \rightarrow \bm{1}_{\Omega_{\zeta}}\bv \quad\text{in}\quad L^\infty(I;L^2(\Omega \cup S_\ell)), 
\\&\bm{1}_{\Omega_{\eta^\varepsilon}}\nabx\bu^\varepsilon  \rightarrow \bm{1}_{\Omega_{\zeta}}\nabx\bv \quad\text{in}\quad L^2(I;L^2(\Omega \cup S_\ell)),  
\\&\bm{1}_{\Omega_{\eta^\varepsilon}}
\bT^\varepsilon  \rightarrow \bm{1}_{\Omega_{\zeta}}\bU \quad\text{in}\quad L^\infty(I;L^2(\Omega \cup S_\ell))
\end{align*}
hold. 
\end{theorem}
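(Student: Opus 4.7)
The approach I would take is the \emph{relative energy} or weak--strong method, directly comparing each strong solution $(\eta^\varepsilon, \bu^\varepsilon, p^\varepsilon, \bT^\varepsilon)$ of the diffusive--damped system with the given essentially bounded weak solution $(\zeta, \bv, \bU)$ of the Oldroyd-type limit. The essential boundedness of $\bU$ supplied by Definition \ref{def:weaksolmartFPBounded} is decisive, since all nonlinear solute perturbations in the relative energy must be absorbed through the viscous dissipation of the velocity.

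\textbf{Uniform bounds and domain alignment.} Testing \eqref{momEq} with $\bu^\varepsilon$ and \eqref{solute} with $\bT^\varepsilon$, and using \eqref{interface} to couple with \eqref{shellEQ}, yields uniform-in-$\varepsilon$ bounds on $\bu^\varepsilon$ in $L^\infty_t L^2_\bx \cap L^2_t W^{1,2}_\bx$, on $\bT^\varepsilon$ in $L^\infty_t L^2_\bx$, on $\sqrt{\varepsilon}\,\nabx \bT^\varepsilon$ in $L^2_{t,\bx}$, on $\eta^\varepsilon$ in $L^\infty_t W^{2,2}(\omega)$, and on $\partial_t \eta^\varepsilon$ in $L^\infty_t L^2(\omega) \cap L^2_t W^{1,2}(\omega)$, together with $\Vert\eta^\varepsilon\Vert_{L^\infty(I\times\omega)} < L$ uniformly on the existence interval. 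Since the solutions live on the distinct moving domains $\Omega_{\eta^\varepsilon(t)}$ and $\Omega_{\zeta(t)}$, I would transport $(\bv, \bU)$ from the latter to the former via the composite Hanzawa map $\bm{\Psi}_{\eta^\varepsilon(t)} \circ \bm{\Psi}_{-\zeta(t)}$ (followed by a small Bogovskii-type correction to restore incompressibility), obtaining extensions $\widetilde{\bv}^\varepsilon$ and $\widetilde{\bU}^\varepsilon$ defined on $\Omega_{\eta^\varepsilon(t)}$ that match the interface velocity $\partial_t\eta^\varepsilon \bn$; because this composite map is uniformly close to the identity with strictly positive Jacobian, the transport contributes only remainders that are linear in $\eta^\varepsilon - \zeta$.

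\textbf{Relative energy identity and Gronwall.} On $\Omega_{\eta^\varepsilon(t)}$ I would form
\begin{equation*}
E^\varepsilon(t) = \tfrac12 \int_{\Omega_{\eta^\varepsilon}} |\bu^\varepsilon - \widetilde{\bv}^\varepsilon|^2 \dx + \tfrac12 \int_{\Omega_{\eta^\varepsilon}} |\bT^\varepsilon - \widetilde{\bU}^\varepsilon|^2 \dx + \tfrac12 \int_\omega \bigl(|\partial_t(\eta^\varepsilon - \zeta)|^2 + |\partial_y^2(\eta^\varepsilon - \zeta)|^2\bigr) \dy.
\end{equation*}
Differentiating in time, using the strong equations for $(\eta^\varepsilon, \bu^\varepsilon, \bT^\varepsilon)$ and the weak formulations \eqref{weakFokkerPlanckEq}--\eqref{weakFluidStrut} for $(\zeta, \bv, \bU)$ tested against the transported pairs $(\bu^\varepsilon - \widetilde{\bv}^\varepsilon, \partial_t(\eta^\varepsilon - \zeta))$ and $\bT^\varepsilon - \widetilde{\bU}^\varepsilon$ (admissible precisely because of the matching interface velocity), I would isolate on the good side the dissipation $\nu \Vert \nabx(\bu^\varepsilon - \widetilde{\bv}^\varepsilon)\Vert_{L^2}^2 + \gamma \Vert \partial_y \partial_t (\eta^\varepsilon - \zeta)\Vert_{L^2}^2$, and on the error side four contributions. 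The $\varepsilon$-terms $-\varepsilon \int \bT^\varepsilon : (\bT^\varepsilon - \widetilde{\bU}^\varepsilon) - \varepsilon \int \nabx \bT^\varepsilon : \nabx (\bT^\varepsilon - \widetilde{\bU}^\varepsilon)$ yield, after absorbing the two negative pieces into the left, cross terms of size $O(\sqrt{\varepsilon})$ by Step~1. The corotational commutator, via the algebraic cancellation underlying \eqref{conserveSolute}, collapses to the single surviving piece $\int\bigl[\mathbb{W}(\nabx(\bu^\varepsilon - \widetilde{\bv}^\varepsilon)) \widetilde{\bU}^\varepsilon - \widetilde{\bU}^\varepsilon \mathbb{W}(\nabx(\bu^\varepsilon - \widetilde{\bv}^\varepsilon))\bigr] : (\bT^\varepsilon - \widetilde{\bU}^\varepsilon)$, bounded by $\Vert \bU \Vert_{L^\infty} \Vert \nabx(\bu^\varepsilon - \widetilde{\bv}^\varepsilon)\Vert_{L^2} \Vert \bT^\varepsilon - \widetilde{\bU}^\varepsilon\Vert_{L^2}$ and absorbed into the viscous dissipation by Young's inequality (this is precisely where the essential boundedness of $\bU$ is indispensable). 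The standard convective defects $\int (\bu^\varepsilon - \widetilde{\bv}^\varepsilon) \cdot \nabx \widetilde{\bv}^\varepsilon \cdot (\bu^\varepsilon - \widetilde{\bv}^\varepsilon) \dx$ and their solute analogues are handled through the strong-side regularity $\nabx \bu^\varepsilon \in L^2_t W^{1,2}_\bx$; and the Hanzawa remainders are dominated by $\Vert \eta^\varepsilon - \zeta\Vert_{W^{2,2}(\omega)} \leq C \sqrt{E^\varepsilon(t)}$. Gronwall's lemma then closes $E^\varepsilon(t) \leq C\bigl(E^\varepsilon(0) + \sqrt{\varepsilon}\bigr)\exp(Ct)$, and since $E^\varepsilon(0) \to 0$ by \eqref{dataConv}, all the strong convergences in the statement follow: the $L^\infty_t L^2$ and $L^\infty_t W^{2,2}(\omega)$ pieces directly from $E^\varepsilon \to 0$, and the $L^2$-in-time convergences of $\nabx \bu^\varepsilon$ and $\partial_t \partial_y \eta^\varepsilon$ from the integrated dissipation.

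\textbf{Main obstacle.} The hard part is the interaction between the moving-domain bookkeeping and the low regularity of $\bU$. The transported field $\widetilde{\bU}^\varepsilon$ is only $L^\infty$ in spacetime, so the error term $\varepsilon \int \nabx \bT^\varepsilon : \nabx \widetilde{\bU}^\varepsilon$ cannot be estimated by placing a gradient on $\widetilde{\bU}^\varepsilon$ directly. I would resolve this by a two-step regularisation in which $\bU$ is first replaced by a mollification $\bU_\delta$ (with $\delta = \delta(\varepsilon) \to 0$ chosen so that $\sqrt{\varepsilon}\,\Vert \nabx \widetilde{\bU}^\varepsilon_\delta\Vert_{L^2}$ is still $o(1)$), the regularisation error being controlled in $L^p$ via \eqref{conserveSolute} uniformly in $p$, and only then running the relative energy estimate; coupling this regularisation with the Bogovskii correction on the moving boundary while maintaining all remainders linear in $\sqrt{E^\varepsilon}$ or $\sqrt{\varepsilon}$ constitutes the bulk of the technical work, along the lines of \cite{mensah2025vanishing} but now without the benefit of solute damping or diffusion in the target system.
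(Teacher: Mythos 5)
Your overall strategy --- a relative-energy/weak--strong comparison aligned by a Hanzawa map, with the corotational commutator absorbed into the viscous dissipation via the essential boundedness of $\bU$, and Gr\"onwall closing the estimate --- is exactly the paper's strategy (Proposition \ref{prop:main}). The one substantive difference is the \emph{direction} of the transport, and it matters. You pull the weak solution $(\bv,\bU)$ back onto $\Omega_{\eta^\varepsilon}$; the paper instead pushes the strong solution forward onto $\Omega_{\zeta}$ via $\bm{\Psi}_{\eta-\zeta}$, so that every perturbation term generated by the change of variables ($\mathbf{h}_{\eta-\zeta}(\overline{\bu})$, $\mathbb{G}_{\eta-\zeta}$, $\mathbb{H}_{\eta-\zeta}(\overline{\bu},\overline{\bT})$ in \eqref{momEq2}--\eqref{solute2}) involves derivatives of the \emph{transported} field only, and these are all controlled by the $W^{2,2}$-in-space and $W^{1,2}$-in-time regularity of the strong solution. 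Your choice puts the derivatives on the transported $\widetilde{\bv}^\varepsilon$ and $\widetilde{\bU}^\varepsilon$, which is precisely why you are forced into the mollification $\bU_\delta$, the Bogovskii correction, and a density argument to make the pulled-back differences admissible test functions in \eqref{weakFokkerPlanckEq}--\eqref{weakFluidStrut}; none of this is needed in the paper. Two further points you should fix if you pursue your direction: (i) the transported $\widetilde{\bv}^\varepsilon$ should retain the weak solution's interface velocity $\partial_t\zeta\,\bn$, not acquire $\partial_t\eta^\varepsilon\bn$ as you state, since it is the \emph{difference} $\bu^\varepsilon-\widetilde{\bv}^\varepsilon$ that must trace to $\partial_t(\eta^\varepsilon-\zeta)\bn$ for the structure coupling to close; (ii) the term $-\varepsilon\int\Delta\bT^\varepsilon:(\bT^\varepsilon-\widetilde{\bU}^\varepsilon)$ cannot be integrated by parts onto $\widetilde{\bU}^\varepsilon$ at all, and estimating it directly requires $\varepsilon\Vert\bT^\varepsilon\Vert_{L^2_tW^{2,2}_\bx}\to 0$ (the paper's error term is $\varepsilon^2\int(\Vert\overline{\bT}\Vert_{W^{2,2}}^2+\Vert\bU\Vert_{L^\infty}^2)$), so your claimed $O(\sqrt{\varepsilon})$ rate from the energy bound on $\sqrt{\varepsilon}\,\nabx\bT^\varepsilon$ alone does not suffice for that term; your $\delta(\varepsilon)$-mollification is a workable but unexecuted substitute. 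Finally, note that the paper obtains convergence on $\Omega_\zeta$ first and then recovers the stated convergences of the indicator-extended quantities on $\Omega\cup S_\ell$ by the triangle inequality \eqref{triIneq} together with the uniform convergence of $\eta^\varepsilon$ to $\zeta$; your argument needs the analogous final step.
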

\begin{remark}
Theorem \ref{thm:main2} trivially holds for fixed spatial domains where formally speaking, $\eta^\varepsilon=\zeta=0$ and $\Omega_{\eta^\varepsilon}=\Ozeta=\Omega$.
\end{remark} 
An immediate consequence of Theorem \ref{thm:main2} is a \textit{weak-strong uniqueness}  result. Generally speaking, such a result states that  a weak solution is unique in the class of a strong solution.
This will usually require that both solutions solves the same equation and may also require additionally regularity of either solution or their data. See for instant \cite{BMSS, chemetov2019weak, germain2011weak, schwarzacher2022weak}. 
In our setting, however, \eqref{divfree}--\eqref{interface} and \eqref{divfreeL}--\eqref{interfaceL}  are two different systems, albeit related, with their own notion of a solution. Nevertheless, we present a \textit{`limiting' weak-strong uniqueness} result which states that as $\varepsilon\rightarrow0$, the essentially bounded weak solution to \eqref{divfreeL}--\eqref{interfaceL} will be unique in the class of the strong solution to \eqref{divfree}--\eqref{interface}. Here, we do not require additional regularity assumption on their respective dataset. The precise statement is:
 \begin{proposition}
Let $(\eta,\bu, p, \bT)$  be a strong solution of \eqref{divfree}-\eqref{interface} with dataset $(  \bT_0, \bu_0, \eta_0, \eta_\star)$ satisfying \eqref{mainDataForAllStrong} and let $(\zeta,\bv,\bU)$ be an essentially bounded weak solution of \eqref{divfreeL}-\eqref{interfaceL} with dataset $(  \bU_0, \bv_0, \zeta_0, \zeta_\star)$ satisfying \eqref{mainDataForAll} and \eqref{mainDataForAllBounded}.
Define
\begin{align*}
\overline{\bu} =\bu \circ \bm{\Psi}_{\eta-\zeta}, \quad \overline{p} =p \circ \bm{\Psi}_{\eta-\zeta},\quad  \overline{\bT} =\bT \circ \bm{\Psi}_{\eta-\zeta}
\end{align*}
with respect to the  Hanzawa transform $\bm{\Psi}_{\eta-\zeta}$. 
As $\varepsilon \rightarrow0$, we have
\begin{equation}
\begin{aligned} 
\sup_{t'\in (0,t]}&\big(
 \Vert  \partial_t(\eta-\zeta)(t') \Vert_{L^{2}(\omega )}^2
+
\Vert  \naby^2(\eta-\zeta)(t') \Vert_{L^{2}(\omega )}^2 
+
\Vert (\overline{\bu}-\bv)(t')\Vert_{L^2(\Omega_{\zeta})}^2
\big)
\\&
+\sup_{t'\in (0,t]}  
\Vert (\overline{\bT}-\bU)(t')\Vert_{L^2(\Omega_{\zeta})}^2 
+ 
\int_0^t
\big(
\Vert \nabx(\overline{\bu}-\bv)\Vert_{L^2(\Omega_{\zeta})}^2 
+ 
\gamma  \Vert  \partial_{t'}\naby(\eta-\zeta) \Vert_{L^{2}(\omega )}^2
\big)
\dt'  
\\
\lesssim& 
 \Vert   \eta_\star-\zeta_\star \Vert_{L^{2}(\omega )}^2
+
\Vert  \naby^2(\eta_0-\zeta_0) \Vert_{L^{2}(\omega )}^2  
+
\Vert \overline{\bu}_0-\bv_0\Vert_{L^2(\Omega_{\zeta_0})}^2
+
\Vert \overline{\bT}_0-\bU_{0}\Vert_{L^2(\Omega_{\zeta_0})}^2 
\end{aligned}
\end{equation}
for all $t\in I$ with a constant that depends on  $\gamma$ and $T$.
\end{proposition}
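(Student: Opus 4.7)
The plan is a relative-energy weak--strong uniqueness argument, in the spirit of \cite{schwarzacher2022weak, BMSS}, adapted to the present two-system setting. Since $(\eta,\bu,p,\bT)$ and $(\zeta,\bv,\bU)$ live on different moving domains, I first pull the strong solution back onto $\Ozeta$ through the Hanzawa diffeomorphism $\bm{\Psi}_{\eta-\zeta}$, obtaining $\overline{\bu},\overline{p},\overline{\bT}$. Computing the equations satisfied by these transformed fields on $I\times\Ozeta$ introduces commutator errors involving $\nabx\bm{\Psi}_{\eta-\zeta}-\mathbb{I}$ and $\partial_t\bm{\Psi}_{\eta-\zeta}$; these are of order $\eta-\zeta$ and will be estimated by $\Vert\eta-\zeta\Vert_{W^{2,\infty}(\omega)}$ times higher Sobolev norms of the strong solution, which are finite by Definition \ref{def:strongSolution}. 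I then subtract the weak formulations \eqref{weakFluidStrut}--\eqref{weakFokkerPlanckEq} for $(\zeta,\bv,\bU)$ from the pulled-back equations to produce evolution equations for $\overline{\bu}-\bv$, $\partial_t(\eta-\zeta)$, $\naby^2(\eta-\zeta)$, and $\overline{\bT}-\bU$.

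The core energy inequality is obtained by testing the momentum--structure difference equation with the pair $(\overline{\bu}-\bv,\partial_t(\eta-\zeta))$ --- which is admissible in \eqref{weakFluidStrut} because the Dirichlet interface conditions $\bu\circ\bm{\varphi}_\eta=(\partial_t\eta)\bn$ and $\bv\circ\bm{\varphi}_\zeta=(\partial_t\zeta)\bn$ are compatible after the Hanzawa pullback --- and the stress difference equation with $\overline{\bT}-\bU$. The corotational structure together with $\divx\bu=\divx\bv=0$ annihilates the diagonal corotational terms $\mathbb{W}(\nabx\bu)\bT:\bT$ and $\mathbb{W}(\nabx\bv)\bU:\bU$, while the cross-piece $\divx(\overline{\bT}-\bU)$ tested against $\overline{\bu}-\bv$ combines (by integration by parts and stress symmetry) with a term from the stress equation to form an exact time derivative of the total relative energy. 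The remaining cross corotational terms of the form $\mathbb{W}(\nabx(\overline{\bu}-\bv))\bU:(\overline{\bT}-\bU)$ are controlled via the essential boundedness $\bU\in L^\infty(I;L^\infty(\Ozeta))$ guaranteed by Definition \ref{def:weaksolmartFPBounded}, using H\"older followed by Young to absorb $\Vert\nabx(\overline{\bu}-\bv)\Vert_{L^2(\Ozeta)}^2$ into the viscous term $\nu\Vert\nabx(\overline{\bu}-\bv)\Vert_{L^2(\Ozeta)}^2$. The $\varepsilon$-contributions from the strong system, coming from testing $-\varepsilon(1-\Delx)\bT$ against $\overline{\bT}-\bU$, split into a favourable sign-definite piece plus a residual $\varepsilon\langle(1-\Delx)\bU,\overline{\bT}-\bU\rangle$ that I will show vanishes as $\varepsilon\to 0$ using the $L^\infty$-regularity of $\bU$ and the convergences of Theorem \ref{thm:main2}.

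The main obstacle will be the convective cross term $\int_{\Ozeta}((\overline{\bu}-\bv)\cdot\nabx)\bU:(\overline{\bT}-\bU)\dx$, because $\bU$ is only essentially bounded and does not possess a weak spatial gradient in $L^2$. The remedy is to move the derivative onto the test quantity by writing the term as $-\int_{\Ozeta}\bU:\divx((\overline{\bu}-\bv)\otimes(\overline{\bT}-\bU))\dx$ and using $\divx(\overline{\bu}-\bv)=0$ to reduce to a bound by $\Vert\bU\Vert_{L^\infty}\Vert\overline{\bu}-\bv\Vert_{L^4}\Vert\nabx(\overline{\bT}-\bU)\Vert_{L^{4/3}}$ or, alternatively, to recognise the combination with the corotational term and reorganise it into a full time derivative of $\tfrac12\Vert\overline{\bT}-\bU\Vert_{L^2}^2$ plus terms absorbable by viscosity. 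The remaining fluid convective differences $((\overline{\bu}\cdot\nabx)\overline{\bu}-(\bv\cdot\nabx)\bv)\cdot(\overline{\bu}-\bv)$ are rewritten as $((\overline{\bu}-\bv)\cdot\nabx)\bv\cdot(\overline{\bu}-\bv)$ modulo antisymmetric vanishing and bounded using the strong regularity $\bu\in L^2(I;W^{2,2}(\Oeta))$ from Definition \ref{def:strongSolution}. Collecting everything, and using Korn--Poincar\'e on $\Ozeta$ together with the trace control $\Vert\naby(\eta-\zeta)\Vert_{L^2(\omega)}^2$ absorbed by the $\gamma$-dissipation from the structure, the resulting differential inequality takes the form $\tfrac{\mathrm{d}}{\dt}E(t)\le C(\gamma,T)E(t)+o_\varepsilon(1)$ where $E(t)$ is the quantity on the left-hand side of the claimed estimate; Gronwall then yields the stated bound, with the $o_\varepsilon(1)$ error vanishing in the limit $\varepsilon\to 0$ courtesy of the $\varepsilon$-uniform energy bounds underlying Theorem \ref{thm:main2}.
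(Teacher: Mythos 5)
Your overall strategy coincides with the paper's: pull the strong solution back to $\Ozeta$ with the Hanzawa map, run a relative-energy argument testing with $(\overline{\bu}-\bv,\partial_t(\eta-\zeta))$ and $\overline{\bT}-\bU$, exploit the corotational cancellation and $\bU\in L^\infty(I;L^\infty(\Ozeta))$, and close with Gr\"onwall, with the $\varepsilon$-terms surviving only as an error that vanishes in the limit. However, two of your concrete steps fail precisely where the lack of regularity of $\bU$ bites. First, for the convective cross term your primary remedy is to integrate by parts and bound $\Vert\bU\Vert_{L^\infty}\Vert\overline{\bu}-\bv\Vert_{L^4}\Vert\nabx(\overline{\bT}-\bU)\Vert_{L^{4/3}}$: this is ill-defined, since an essentially bounded weak solution has no weak spatial gradient, so $\nabx(\overline{\bT}-\bU)$ is not a function in any $L^p$. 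The correct manipulation (the one the paper uses) is to combine the transport terms of $\bU$, of $\overline{\bT}$, and of the cross term $\overline{\bT}:\bU$ via Reynolds' transport theorem, the divergence-free condition and the interface condition, so that after cancellation of the boundary integrals the surviving term is $\int_{\Ozeta}((\overline{\bu}-\bv)\cdot\nabx)\overline{\bT}:(\bU-\overline{\bT})\dx$ — the derivative lands only on the strong-solution component $\overline{\bT}\in L^2(I;W^{2,2}(\Ozeta))$, and Ladyzhenskaya's inequality then closes the estimate. Second, your split of the diffusion/damping term into a sign-definite piece plus a residual $\varepsilon\langle(1-\Delx)\bU,\overline{\bT}-\bU\rangle$ is likewise ill-defined ($\Delx\bU$ does not exist, and you cannot move derivatives onto $\overline{\bT}-\bU$ for the same reason as above). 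The paper instead keeps $-\varepsilon\int(1-\Delx)\overline{\bT}:(\overline{\bT}-\bU)\dx$ intact and bounds it by Cauchy--Schwarz and Young as $\delta\sup_{t'}\Vert\overline{\bT}-\bU\Vert_{L^2(\Ozeta)}^2+c(\delta)\,\varepsilon^2\int_0^t\Vert\overline{\bT}\Vert_{W^{2,2}(\Ozeta)}^2\dt'$, which is the $\varepsilon^2$-weighted error that disappears as $\varepsilon\to0$.

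Two further points. Invoking ``the convergences of Theorem \ref{thm:main2}'' inside your energy argument is circular: those convergences are themselves obtained from the very relative-energy inequality you are constructing; only the $\varepsilon$-uniform bounds on the strong solutions may be used at that stage, and they are what is actually needed. Finally, you claim the Hanzawa commutator errors are controlled by $\Vert\eta-\zeta\Vert_{W^{2,\infty}(\omega)}$ times strong norms; the relative energy only controls $\Vert\naby^2(\eta-\zeta)\Vert_{L^2(\omega)}$, so a $W^{2,\infty}$-dependence would not close the Gr\"onwall loop. The estimates must (and, as in the paper, do) use only $\Vert\eta-\zeta\Vert_{W^{2,2}(\omega)}$, together with the one-dimensional embedding $W^{2,2}(\omega)\hookrightarrow W^{1,\infty}(\omega)$, multiplied by time-integrable norms of the strong solution such as $\Vert\overline{\bu}\Vert_{W^{2,2}(\Ozeta)}^2$, $\Vert\partial_t\overline{\bT}\Vert_{L^2(\Ozeta)}^2$ and $\Vert\overline{\bT}\Vert_{W^{2,2}(\Ozeta)}^2$.
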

\begin{proof}
As mentioned earlier, the proof of this result is a direct consequence of Theorem \ref{thm:main2} and in particular, achieved by passing to the limit $\varepsilon\rightarrow0$ in \eqref{contrEst0} of Proposition \ref{prop:main} below.
\end{proof}
\begin{remark}
Unfortunately we are unable to recover actual unconditional uniqueness for essentially bounded weak solutions of \eqref{divfreeL}-\eqref{interfaceL} with dataset   satisfying \eqref{mainDataForAll} and \eqref{mainDataForAllBounded} from our analysis and it also not clear if that is to be expected.  
\end{remark}

\subsection{Plan for the rest of the paper}
In the next section, Section \ref{sec:prelim}, we collect some notations, set up our functional framework, and collect some key results that would be used repeatedly in the proof of our main results. We then devote Section \ref{sec:decay} and Section \ref{sec:singularLimit} to the proofs of Theorem \ref{thm:one} and Theorem \ref{thm:main2}, respectively.

\section{Preliminaries }
\label{sec:prelim}
\noindent  
For two non-negative quantities $F$ and $G$, we write $F \lesssim G$  if there is a constant $c>0$  such that $F \leq c\,G$. If $F \lesssim G$ and $G\lesssim F$ both hold, we use the notation $F\sim G$.  The anti-symmetric gradient of a vector $\bff\in \mathbb{R}^d$ is denoted by $\mathbb{W}(\nabx \bff)=\tfrac{1}{2}(\nabx \bff-(\nabx \bff)^\top)$ and the scalar matrix product of the matrices $\mathbb{A}=(a_{ij})_{i,j=1}^d$ and $\mathbb{B}=(b_{ij})_{i,j=1}^d$ is denoted by $\mathbb{A}:\mathbb{B}=\sum_{ij}a_{ij}b_{ij}$.
The symbol $\vert \cdot \vert$ may be used in four different contexts. For a scalar function $f\in \mathbb{R}$, $\vert f\vert$ denotes the absolute value of $f$. For a vector $\bff\in \mathbb{R}^d$, $\vert \bff \vert$ denotes the Euclidean norm of $\bff$. For a square matrix $\mathbb{F}\in \mathbb{R}^{d\times d}$, $\vert \mathbb{F} \vert$ shall denote the Frobenius norm $\sqrt{\mathrm{trace}(\mathbb{F}^T\mathbb{F})}$. Also, if $S\subseteq  \mathbb{R}^d$ is  a (sub)set, then $\vert S \vert$ is the $d$-dimensional Lebesgue measure of $S$.  
Since we only consider functions on $\omega \subset\mathbb{R}$ with periodic boundary
conditions and zero mean values, we have the following equivalences
\begin{align}
\label{equiNorm}
\Vert \cdot\Vert_{W^{1,2}(\omega)}\sim
\Vert \partial_y\cdot\Vert_{L^{2}(\omega)},
\qquad
\Vert \cdot\Vert_{W^{2,2}(\omega)}\sim
\Vert \partial_y^2\cdot\Vert_{L^{2}(\omega)},
\qquad
\Vert \cdot\Vert_{W^{4,2}(\omega)}\sim
\Vert \partial_y^4\cdot\Vert_{L^{2}(\omega)}.
\end{align} 
For $I:=(0,T)$, $T>0$, and $\eta\in C(\overline{I}\times\omega)$ satisfying $\|\eta\|_{L^\infty(I\times\omega)}\leq L$ where $L>0$ is a constant, we define for $1\leq p,r\leq\infty$,
\begin{align*} 
L^p(I;L^r(\Omega_\eta))&:=\Big\{v\in L^1(I\times\Omega_\eta):\substack{v(t,\cdot)\in L^r(\Omega_{\eta(t)})\,\,\text{for a.e. }t,\\\|v(t,\cdot)\|_{L^r(\Omega_{\eta(t)})}\in L^p(I)}\Big\},\\
L^p(I;W^{1,r}(\Omega_\eta))&:=\big\{v\in L^p(I;L^r(\Omega_\eta)):\,\,\nabx v\in L^p(I;L^r(\Omega_\eta))\big\}.
\end{align*} 
Higher-order Sobolev spaces can be defined accordingly. For $k>0$ with $k\notin\mathbb N$, we define the fractional Sobolev space $L^p(I;W^{k,r}(\Oeta))$ as the class of $L^p(I;L^r(\Omega_\eta))$-functions $v$ for which 
\begin{align*}
\|v\|_{L^p(I;W^{k,r}(\Oeta))}^p
&=\int_I\bigg(\int_{\Oeta} \vert v\vert^r\dx
+\int_{\Oeta}\int_{\Oeta}\frac{|v(\bx)-v(\bx')|^r}{|\bx-\bx'|^{d+k r}}\dx\dx'\bigg)^{\frac{p}{r}}\dt
\end{align*}
is finite. Accordingly, we can also introduce fractional differentiability in time for the spaces on moving domains.

Next, 
the Hanzawa transform $\bm{\Psi}_\eta$ together with its inverse 
 $\bm{\Psi}_\eta^{-1} : \Oeta \rightarrow\Omega$  possesses the following properties, see \cite{breit2022regularity, BMSS} for details. If for some $\ell,R>0$, we assume that
\begin{align*}
\Vert\eta\Vert_{L^\infty(\omega)}
+
\Vert\zeta\Vert_{L^\infty(\omega)}
< \ell <L \qquad\text{and}\qquad
\Vert\naby\eta\Vert_{L^\infty(\omega)}
+
\Vert\naby\zeta\Vert_{L^\infty(\omega)}
<R
\end{align*}
holds, then for any  $s>0$, $\varrho,p\in[1,\infty]$ and for any $\eta,\zeta \in B^{s}_{\varrho,p}(\omega)\cap W^{1,\infty}(\omega)$ (where $B^{s}_{\varrho,p}$ is a Besov space), we have that the estimates
\begin{align}
\label{210and212}
&\Vert \bm{\Psi}_\eta \Vert_{B^s_{\varrho,p}(\Omega\cup S_\ell)}
+
\Vert \bm{\Psi}_\eta^{-1} \Vert_{B^s_{\varrho,p}(\Omega\cup S_\ell)}
 \lesssim
1+ \Vert \eta \Vert_{B^s_{\varrho,p}(\omega)},
\\
\label{211and213}
&\Vert \bm{\Psi}_\eta - \bm{\Psi}_\zeta  \Vert_{B^s_{\varrho,p}(\Omega\cup S_\ell)} 
+
\Vert \bm{\Psi}_\eta^{-1} - \bm{\Psi}_\zeta^{-1}  \Vert_{B^s_{\varrho,p}(\Omega\cup S_\ell)} 
\lesssim
 \Vert \eta - \zeta \Vert_{B^s_{\varrho,p}(\omega)}
\end{align}
and
\begin{align}
\label{218}
&\Vert \partial_t\bm{\Psi}_\eta \Vert_{B^s_{\varrho,p}(\Omega\cup S_\ell)}
\lesssim
 \Vert \partial_t\eta \Vert_{B^{s}_{ \varrho,p}(\omega)},
\qquad
\eta
\in W^{1,1}(I;B^{s}_{\varrho,p}(\omega))
\end{align}
holds uniformly in time with the hidden constants depending only on the reference geometry, on $L-\ell$ and $R$. Finally, we present a useful result we shall use at several instances in our proof. 
\begin{proposition}
\label{prop:zeroCorotational}
Let $\bw=(w_1,w_2)$ be a $2$-d vector and $\mathbb{Z}=(z_{ij})_{i,j=1}^2$ a $2\times 2$ matrix. Then for any $n\in\mathbb{N}$ and for any $\mathbb{Y}\in\{\mathbb{Z},\mathbb{Z}^\top\}$, the equation
\begin{align*}
\mathbb{W}(\nabx \bw)\mathbb{Z}:\mathbb{Y}
^n + \mathbb{Z}\mathbb{W}((\nabx \bw)^\top) :\mathbb{Y}^n=0   
\end{align*}
holds where $\mathbb{Y}^n$ is the $n$-times matrix multiplication.
\end{proposition}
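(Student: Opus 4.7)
My plan is to rewrite the identity as a commutator condition and then exploit the Cayley--Hamilton theorem in dimension two. First I would use the antisymmetry $\mathbb{W}((\nabx\bw)^\top) = -\mathbb{W}(\nabx\bw)$ together with the standard Frobenius contraction identities $AB:C = A:CB^\top$ and $AB:C = B:A^\top C$ to recast the left-hand side as
\begin{align*}
\mathbb{W}(\nabx\bw)\mathbb{Z}:\mathbb{Y}^n + \mathbb{Z}\,\mathbb{W}((\nabx\bw)^\top):\mathbb{Y}^n
&= \mathbb{W}(\nabx\bw):\mathbb{Y}^n\mathbb{Z}^\top \;-\; \mathbb{W}(\nabx\bw):\mathbb{Z}^\top\mathbb{Y}^n \\
&= \mathbb{W}(\nabx\bw) : [\mathbb{Y}^n,\mathbb{Z}^\top].
\end{align*}
Since the Frobenius inner product of an antisymmetric matrix with a symmetric one vanishes, it suffices to show that the commutator $[\mathbb{Y}^n,\mathbb{Z}^\top]$ is symmetric for both choices $\mathbb{Y}\in\{\mathbb{Z},\mathbb{Z}^\top\}$.

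The case $\mathbb{Y}=\mathbb{Z}^\top$ is immediate, since $(\mathbb{Z}^\top)^n$ commutes with $\mathbb{Z}^\top$ and the commutator is zero. For $\mathbb{Y}=\mathbb{Z}$ I would invoke the planar Cayley--Hamilton identity $\mathbb{Z}^2 = (\tr\mathbb{Z})\mathbb{Z} - (\det\mathbb{Z})\mathbb{I}$ and argue by induction that
\begin{align*}
\mathbb{Z}^n = a_n\mathbb{Z} + b_n\mathbb{I}
\end{align*}
for scalars $a_n,b_n$ depending only on $\tr\mathbb{Z}$ and $\det\mathbb{Z}$. Because $\mathbb{I}$ commutes with $\mathbb{Z}^\top$, this reduces the commutator to
\begin{align*}
[\mathbb{Z}^n,\mathbb{Z}^\top] = a_n\,[\mathbb{Z},\mathbb{Z}^\top] = a_n\big(\mathbb{Z}\mathbb{Z}^\top - \mathbb{Z}^\top\mathbb{Z}\big),
\end{align*}
which is manifestly symmetric as each of $\mathbb{Z}\mathbb{Z}^\top$ and $\mathbb{Z}^\top\mathbb{Z}$ is. Pairing against the antisymmetric $\mathbb{W}(\nabx\bw)$ then yields zero in both cases.

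The main substantive step is the Cayley--Hamilton reduction: this is precisely where the two-dimensional hypothesis enters, since the collapse of all powers $\mathbb{Y}^n$ into the two-dimensional subspace $\mathrm{span}\{\mathbb{Y},\mathbb{I}\}$ fails in dimensions $d\geq 3$, and without it one would need a separate argument for every $n$. Everything else is a routine algebraic manipulation using the cyclicity of the trace and the symmetric/antisymmetric decomposition of $2\times 2$ matrices, so no serious obstacle is anticipated beyond this planar algebraic identity.
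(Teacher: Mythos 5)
Your argument is correct. The reduction to $\mathbb{W}(\nabla_{\mathbf{x}}\bw):[\mathbb{Y}^n,\mathbb{Z}^\top]$ via $\mathbb{W}((\nabla_{\mathbf{x}}\bw)^\top)=-\mathbb{W}(\nabla_{\mathbf{x}}\bw)$ and the contraction identities is sound, the orthogonality of antisymmetric and symmetric matrices under the Frobenius product is standard, and the Cayley--Hamilton collapse $\mathbb{Z}^n=a_n\mathbb{Z}+b_n\mathbb{I}$ correctly reduces the commutator to $a_n(\mathbb{Z}\mathbb{Z}^\top-\mathbb{Z}^\top\mathbb{Z})$, which is symmetric; the case $\mathbb{Y}=\mathbb{Z}^\top$ is trivially zero as you say. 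Note that the paper itself does not prove this proposition but defers to the reference \cite{mensah2025vanishing}, where the verification is carried out by direct computation; your structural argument is self-contained, cleaner, and correctly isolates the planar Cayley--Hamilton identity as the one genuinely two-dimensional ingredient, so it is if anything an improvement over a componentwise check.
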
 
The proof of Proposition \ref{prop:zeroCorotational} is given in \cite{mensah2025vanishing}.

\section{Decay rate}
\label{sec:decay}
This short section is devoted to the proof of our first main result, Theorem \ref{thm:one}. 
%
%
We begin by finding a rate for the extra stress tensor. For this, testing \eqref{solute} with $\bT$ and using Proposition \ref{prop:zeroCorotational} yield
\begin{align*}
\frac{1}{2}\frac{\dd}{\dt}\Vert \bT \Vert_{L^2(\Oeta)}^2
+
\varepsilon\Vert \bT \Vert_{L^2(\Oeta)}^2
+
\varepsilon \Vert\nabx \bT \Vert_{L^2(\Oeta)}^2=0 
\end{align*}
so we can deduce that
\begin{align*}
\frac{\dd}{\dt}\Vert \bT \Vert_{L^2(\Oeta)}^2
+
2\varepsilon\Vert \bT \Vert_{L^2(\Oeta)}^2
\leq 0 .
\end{align*}
Consequently, applying Gr\"onwall's lemma yield
\begin{align}
\label{t2}
\Vert \bT(t)\Vert_{L^2(\Oeta)}^2\leq e^{-2\varepsilon t}\Vert\bT_0\Vert_{L^2(\Omega_{\eta_0})}^2.
\end{align}
Taking the square-root yields the first estimate in Theorem \ref{thm:one}.

Next we test the solvent-structure subsystem \eqref{momEq}-\eqref{shellEQ} with $(\bu, \partial_t\eta)$. Due to the interface condition \eqref{interface}, we obtain
\begin{align*}
\frac{1}{2}\frac{\dd}{\dt}\big(
\Vert \partial_t\eta\Vert_{L^2(\omega)}^2
+
\Vert \naby^2\eta\Vert_{L^2(\omega)}^2
+
\Vert \bu \Vert_{L^2(\Oeta)}^2)
+
\big(
\gamma\Vert \partial_t\naby\eta \Vert_{L^2(\omega)}^2
+
\nu \Vert\nabx \bu \Vert_{L^2(\Oeta)}^2\big)
\\
\leq \frac{\nu}{2}\Vert\nabx \bu \Vert_{L^2(\Oeta)}^2+
\frac{1}{2\nu}
\Vert  \bT \Vert_{L^2(\Oeta)}^2.
\end{align*}
If we now absorb the viscous term on the right-hand side into the corresponding term on the left, then in particular,
\begin{align*}
&\frac{1}{2}\frac{\dd}{\dt} 
\Vert \partial_t\eta\Vert_{L^2(\omega)}^2 
+ 
\gamma\Vert \partial_t\naby\eta \Vert_{L^2(\omega)}^2
\leq 
\frac{1}{2\nu}
\Vert  \bT \Vert_{L^2(\Oeta)}^2,
\\
&\frac{1}{2}\frac{\dd}{\dt} 
\Vert \bu \Vert_{L^2(\Oeta)}^2 
+ 
\frac{\nu}{2} \Vert\nabx \bu \Vert_{L^2(\Oeta)}^2  
\leq \frac{1}{2\nu}
\Vert  \bT \Vert_{L^2(\Oeta)}^2.
\end{align*}
From the first estimate above, we use the classical Poincar\'e's inequality and \eqref{t2} to obtain
\begin{align*}
& \frac{\dd}{\dt} 
\Vert \partial_t\eta\Vert_{L^2(\omega)}^2 
+ 
2c\gamma\Vert \partial_t \eta \Vert_{L^2(\omega)}^2
\leq 
\frac{1}{\nu}
\Vert  \bT \Vert_{L^2(\Oeta)}^2 
\leq 
\frac{e^{-2\varepsilon t}}{\nu}
\Vert  \bT_0 \Vert_{L^2(\Omega_{\eta_0})}^2 
\end{align*}
which when integrated in time gives
\begin{align*} 
\Vert \partial_t\eta(t)\Vert_{L^2(\omega)}^2 
\leq 
\Vert  \eta_\star\Vert_{L^2(\omega)}^2 
+
\frac{1}{2\nu\varepsilon}(1-e^{-2\varepsilon t})
\Vert  \bT_0 \Vert_{L^2(\Omega_{\eta_0})}^2 
-
\int_0^t
2c\gamma\Vert \partial_s \eta \Vert_{L^2(\omega)}^2 \ds
\end{align*}
and thus by Gr\"onwall's lemma,
\begin{align*} 
\Vert \partial_t\eta(t)\Vert_{L^2(\omega)}^2 
\leq 
e^{-2c\gamma t}
\left[
\Vert  \eta_\star\Vert_{L^2(\omega)}^2 
+
\frac{1}{2\nu\varepsilon}(1-e^{-2\varepsilon t})
\Vert  \bT_0 \Vert_{L^2(\Omega_{\eta_0})}^2 
\right].
\end{align*}
Taking the square-root yields the second estimate in Theorem \ref{thm:one}. We note in passing that  $(1-e^{-2\varepsilon t})\leq 1$.
Let now return to
\begin{align*}
\frac{\dd}{\dt} 
\Vert \bu \Vert_{L^2(\Oeta)}^2 
+ 
\nu\Vert\nabx \bu \Vert_{L^2(\Oeta)}^2  
\leq \frac{1}{\nu}
\Vert  \bT \Vert_{L^2(\Oeta)}^2.
\end{align*}
By using the reverse triangle inequality and the Poincar\'e--Wirtinger inequality,
\begin{align*}
\frac{\dd}{\dt} 
\Vert \bu \Vert_{L^2(\Oeta)}^2 
+ 
c\nu \big(\Vert  \bu \Vert_{L^2(\Oeta)}
-
\Vert  \bu_{\Oeta} \Vert_{L^2(\Oeta)}
\big)^2  
\leq 
\frac{e^{-2\varepsilon t}}{\nu}
\Vert\bT_0\Vert_{L^2(\Omega_{\eta_0})}^2.
\end{align*}
However, by Jensen's inequality,  
\begin{align*}
\Vert  \bu_{\Oeta} \Vert_{L^2(\Oeta)}
\leq 
\Vert  \bu \Vert_{L^2(\Oeta)} \vert\Omega_\eta\vert^{-1/2}
\leq 
\Vert  \bu \Vert_{L^2(\Oeta)}\big( \inf_{t\in I}\vert\Omega_\eta\vert\big)^{-1/2}
\end{align*}
where 
\begin{align*}
\vert\Omega_\eta\vert=\int_\Omega 
\vert\det(\nabx \bm{\Psi}_\eta)\vert \dx 
\end{align*}
is the volume  of $\Omega_\eta$. Thus,
\begin{align*}
\Big(1-\big( \inf_{t\in I}\vert\Omega_\eta\vert\big)^{-1/2}\Big)\Vert  \bu \Vert_{L^2(\Oeta)}
\leq 
\Vert  \bu \Vert_{L^2(\Oeta)}
-
\Vert  \bu_{\Oeta} \Vert_{L^2(\Oeta)}
\end{align*}
and we obtain
\begin{align*}
\frac{\dd}{\dt} 
\Vert \bu \Vert_{L^2(\Oeta)}^2 
+ 
c\nu \Big(1-\big( \inf_{t\in I}\vert\Omega_\eta\vert\big)^{-1/2}\Big)^2\Vert  \bu \Vert_{L^2(\Oeta)}^2  
\leq 
\frac{e^{-2\varepsilon t}}{\nu}
\Vert\bT_0\Vert_{L^2(\Omega_{\eta_0})}^2.
\end{align*}
Consequently,  by Gr\"onwall's lemma,
\begin{align*} 
\Vert \bu(t) \Vert_{L^2(\Oeta)}^2 
\leq 
e^{- c\nu\left(1-\left( \inf_{t\in I}\vert\Omega_\eta\vert\right)^{-1/2}\right)^2 t}
\left[
\Vert \bu_0 \Vert_{L^2(\Omega_{\eta_0})}^2 
+
\frac{1}{2\nu\varepsilon}(1-e^{-2\varepsilon t})
\Vert  \bT_0 \Vert_{L^2(\Omega_{\eta_0})}^2 
\right]
\end{align*}
from which the desired result immediately follow.

\section{Vanishing limit}
\label{sec:singularLimit} 
We devote this section to the proof of our second main result, Theorem \ref{thm:main2}.
Our proof relies on a relative energy method where one measures the distance between two solutions in the energy norm. Unfortunately, because our two solutions under consideration are defined on two separate variable geometries, we are unable to directly measure their distance. To get around this issue, we transform one solution via the  Hanzawa transform onto the domain of the other and measure the distance on this latter domain. In this regard, we consider the following transformation 
\begin{align}
\label{transHan}
\overline{\bu} =\bu \circ \bm{\Psi}_{\eta-\zeta}, \quad \overline{p} =p \circ \bm{\Psi}_{\eta-\zeta},\quad  \overline{\bT} =\bT \circ \bm{\Psi}_{\eta-\zeta}
\end{align}
of  $(\eta ,\bu , p , \bT )$ onto the domain of  $(\zeta, \bv, \pi,  \bU)$ with respect to the Hanzawa transform $\bm{\Psi}_{\eta-\zeta}$. The proof of Theorem \ref{thm:main2} will now follow from the following proposition.
\begin{proposition}\label{prop:main}
Let $(\eta,\bu, p, \bT)$  be a strong solution of \eqref{divfree}-\eqref{interface} with dataset $(  \bT_0, \bu_0, \eta_0, \eta_\star)$. If $(\zeta,\bv, \bU)$ is an essentially bounded weak solution of \eqref{divfreeL}-\eqref{interfaceL} with dataset $(  \bU_0, \bv_0, \zeta_0, \zeta_\star)$, then we have
\begin{equation}
\begin{aligned}
\label{contrEst0} 
\sup_{t'\in (0,t]}&\big(
\Vert (\overline{\bu}-\bv)(t')\Vert_{L^2(\Omega_{\zeta})}^2
+
 \Vert  \partial_t(\eta-\zeta)(t') \Vert_{L^{2}(\omega )}^2
+
\Vert  \naby^2(\eta-\zeta)(t') \Vert_{L^{2}(\omega )}^2 
\big)
\\&
+\sup_{t'\in (0,t]} 
\Vert (\overline{\bT}-\bU)(t')\Vert_{L^2(\Omega_{\zeta})}^2  
+ 
\int_0^t
\big(
\Vert \nabx(\overline{\bu}-\bv)\Vert_{L^2(\Omega_{\zeta})}^2\dt'
+ 
\gamma 
\Vert  \partial_{t'}\naby(\eta-\zeta) \Vert_{L^{2}(\omega )}^2
\big)
\dt'  
\\
\leq&K
\bigg[ 
 \Vert   \eta_\star-\zeta_\star \Vert_{L^{2}(\omega )}^2
+
\Vert  \naby^2(\eta_0-\zeta_0) \Vert_{L^{2}(\omega )}^2  
+
\Vert \overline{\bu}_0-\bv_0\Vert_{L^2(\Omega_{\zeta_0})}^2
\\&
\qquad+
\Vert \overline{\bT}_0-\bU_{0}\Vert_{L^2(\Omega_{\zeta_0})}^2
+
\varepsilon^2\int_0^t
\big(\Vert \overline{\bT} \Vert_{W^{2,2}( \Omega_{\zeta})}^2 
+
\Vert \bU \Vert_{L^{\infty}( \Omega_{\zeta})}^2
\big)
\dt'
\bigg].
\end{aligned}
\end{equation}
for all $t\in I$ where $(\overline{\bu}, \overline{p},  \overline{\bT})$ is given by \eqref{transHan} and where
\begin{align*}
K:=&C\,
e^{
c\int_0^t
 \left( 1+c(\gamma)
+
\Vert \partial_{t'}\overline{\bu}  \Vert_{L^2(\Omega_{\zeta})}^2
+
\Vert \overline{\bu}  \Vert_{W^{2,2}(\Ozeta)}^2
+\Vert \overline{p}  \Vert_{W^{1,2}(\Ozeta)}^2 
+
\Vert  \overline{\bT} \Vert_{W^{2,2}(\Omega_{\zeta})}^2
+
\Vert \partial_{t'}\overline{\bT} \Vert_{L^{2}( \Omega_{\zeta})}^2   
+
\Vert  \bU \Vert_{L^{\infty}(\Omega_{\zeta})}^2 
\right)\dt } 
\end{align*}
with $c$ and $C$ being constants independent of   $\varepsilon$.
\end{proposition}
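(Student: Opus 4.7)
The plan is a relative-energy argument after pulling the strong solution back via the Hanzawa transform $\bm{\Psi}_{\eta-\zeta}$ onto the moving domain $\Omega_{\zeta}$ of the weak solution. The first step is to derive the equations satisfied by $(\overline{\bu},\overline{p},\overline{\bT})$ on $I\times\Omega_{\zeta}$. The chain rule produces geometric correction terms built from $\partial_t\bm{\Psi}_{\eta-\zeta}$, $\nabx\bm{\Psi}_{\eta-\zeta}-\mathbb{I}$ and the associated cofactor matrices; by \eqref{210and212}--\eqref{218} each of these is controlled in the required norms by $\Vert\eta-\zeta\Vert_{W^{2,2}(\omega)}$ and $\Vert\partial_t(\eta-\zeta)\Vert_{W^{1,2}(\omega)}$, so they are quantities already present in the left-hand side of \eqref{contrEst0}.

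Next, I test the transformed momentum and shell equations against $(\overline{\bu}-\bv,\partial_t(\eta-\zeta))$ and the transformed stress equation against $\overline{\bT}-\bU$. In parallel I use the weak formulations \eqref{weakFokkerPlanckEq}--\eqref{weakFluidStrut} with the symmetric choices $\mathbb{Y}=\overline{\bT}$ and $(\bm{\phi},\phi)=(\widetilde{\overline{\bu}},\partial_t\eta)$, where $\widetilde{\overline{\bu}}$ is a solenoidal extension of $\overline{\bu}$ to $\mathbb{R}^2$ matching $\partial_t\eta\,\bn\circ\bm{\varphi}_{\zeta}^{-1}$ on $\partial\Omega_{\zeta}$; this extension is standard and its cost is again controlled by $\Vert\eta-\zeta\Vert$. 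Adding and subtracting the identities, the diagonal transport contributions $((\overline{\bu}-\bv)\cdot\nabx)(\overline{\bu}-\bv)\cdot(\overline{\bu}-\bv)$ and $((\overline{\bu}-\bv)\cdot\nabx)(\overline{\bT}-\bU):(\overline{\bT}-\bU)$ vanish by incompressibility, the diagonal corotational contribution $\mathbb{W}(\nabx\bv)(\overline{\bT}-\bU):(\overline{\bT}-\bU)+(\overline{\bT}-\bU)\mathbb{W}((\nabx\bv)^\top):(\overline{\bT}-\bU)$ vanishes by Proposition \ref{prop:zeroCorotational} with $n=1$, and the boundary contributions at $\partial\Omega_{\zeta}$ cancel the structure contributions via \eqref{interface} and \eqref{interfaceL}.

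What remains are three families of residuals. Group (i): transport cross-terms such as $((\overline{\bu}-\bv)\cdot\nabx)\overline{\bu}\cdot(\overline{\bu}-\bv)$ and $((\overline{\bu}-\bv)\cdot\nabx)\overline{\bT}:(\overline{\bT}-\bU)$, absorbed by H\"older/Young at the cost of the norms of $\overline{\bu}$ and $\overline{\bT}$ appearing in the exponent of $K$. Group (ii): the corotational cross-terms $\mathbb{W}(\nabx(\overline{\bu}-\bv))\overline{\bT}:(\overline{\bT}-\bU)$ and $\mathbb{W}(\nabx(\overline{\bu}-\bv))\bU:(\overline{\bT}-\bU)$, controlled by $\delta\Vert\nabx(\overline{\bu}-\bv)\Vert_{L^2}^2+C(\Vert\overline{\bT}\Vert_{L^\infty}^2+\Vert\bU\Vert_{L^\infty}^2)\Vert\overline{\bT}-\bU\Vert_{L^2}^2$, using the two-dimensional embedding $W^{2,2}\hookrightarrow L^\infty$ for $\overline{\bT}$ and Definition \ref{def:weaksolmartFPBounded} for $\bU$. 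Group (iii): the diffusion residual, which after splitting as
\begin{align*}
-\varepsilon\int_{\Omega_{\zeta}}(1-\Delx)\overline{\bT}:(\overline{\bT}-\bU)\dx = -\varepsilon\Vert\overline{\bT}-\bU\Vert_{L^2}^2-\varepsilon(\bU,\overline{\bT}-\bU)+\varepsilon(\Delx\overline{\bT},\overline{\bT}-\bU),
\end{align*}
is bounded using Cauchy--Schwarz and Young by $\delta\Vert\overline{\bT}-\bU\Vert_{L^2}^2+C\varepsilon^2(\Vert\overline{\bT}\Vert_{W^{2,2}}^2+\Vert\bU\Vert_{L^\infty}^2)$; the $L^\infty$ bound on $\bU$ is precisely what is needed to avoid integrating $\Delx$ by parts onto $\bU$, which has no gradient information.

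Collecting everything and choosing $\delta$ small enough to absorb the dissipations $\nu\Vert\nabx(\overline{\bu}-\bv)\Vert_{L^2}^2$ and $\gamma\Vert\partial_t\nabx(\eta-\zeta)\Vert_{L^2}^2$ into the left-hand side, one arrives at an inequality of the form
\begin{align*}
\frac{\mathrm{d}}{\dt}E(t)+D(t)\lesssim g(t)\,E(t)+\varepsilon^2 h(t),
\end{align*}
where $E(t)$ is the quadratic quantity inside the supremum of \eqref{contrEst0}, $D(t)$ is the dissipation, $g\in L^1(I)$ is exactly the integrand in the exponent of $K$, and $h$ is the $\varepsilon^2$-integrand in the last line of \eqref{contrEst0}; Gronwall then closes the estimate. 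The main obstacle is the treatment of group (ii): since no gradient bound is available for $\bU$, the linearization of the corotational term must be arranged so that the piece containing $\bU$ survives only as a pointwise $L^\infty$ multiplier, which is exactly what justifies the use of the essentially bounded framework in Definition \ref{def:weaksolmartFPBounded} rather than the plain weak setting of Definition \ref{def:weaksolmartFP}.
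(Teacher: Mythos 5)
Your plan is essentially the paper's proof: pull the strong solution back to $\Omega_{\zeta}$ via the Hanzawa transform, take the known relative-energy estimate for the solvent--structure subsystem, and for the solute compute $\tfrac{\mathrm{d}}{\mathrm{d}t}\Vert\overline{\bT}-\bU\Vert_{L^2(\Omega_{\zeta})}^2$ by pairing the transformed stress equation with $\overline{\bT}-\bU$ and the weak formulation with $\overline{\bT}$ (the paper does this via Reynolds' transport theorem), so that the diagonal transport/corotational terms cancel by incompressibility and Proposition~\ref{prop:zeroCorotational}, the corotational cross-terms reduce to $\mathbb{W}(\nabx(\overline{\bu}-\bv))$ against stress multipliers controlled in $L^\infty$ (the paper arranges the algebra so that only $\bU$ survives as multiplier; your variant keeping $\overline{\bT}$ as well is equally admissible since $W^{2,2}\hookrightarrow L^\infty$ in 2D and $\Vert\overline{\bT}\Vert_{W^{2,2}}^2$ already sits in the exponent of $K$), the damping/diffusion residual produces the $\varepsilon^2$ terms by Young, and Gr\"onwall closes the estimate. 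Two detail-level caveats, both of which the paper resolves (partly by citation): a divergence-free test function cannot literally restrict to $\overline{\bu}$ on $\Omega_{\zeta}$, since the transformed field satisfies $\mathbb{B}_{\eta-\zeta}^\top:\nabx\overline{\bu}=0$ rather than $\divx\overline{\bu}=0$, so the ``solenoidal extension'' needs the standard divergence correction (this defect is precisely why $\Vert\overline{p}\Vert_{W^{1,2}(\Omega_{\zeta})}$ appears in $K$); and besides the flat term $\varepsilon(1-\Delx)\overline{\bT}$ you must run the same Young-splitting on the transformed remainders $\varepsilon\divx(\nabx\overline{\bT}(\mathbb{I}-\mathbb{A}_{\eta-\zeta}))$, $\varepsilon(1-J_{\eta-\zeta})\overline{\bT}$ and $\mathbb{H}_{\eta-\zeta}(\overline{\bu},\overline{\bT})$, which is where the weights $\Vert\partial_t\overline{\bT}\Vert_{L^2}^2$ and $\Vert\overline{\bu}\Vert_{W^{2,2}}^2$ in $K$ originate.
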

\begin{proof}
As shown \cite[Section 4.1]{breit2022regularity} (see also \cite[Section 5]{BMSS}) for the solvent-structure subproblem and in \cite[Section 5.3]{mensah2023weak} for the solute subproblem, the transformed solution  $(\eta , \overline{\bu} , \overline{p} ,\overline{\rho} ,\overline{\bT} )$  solves
\begin{align}
\mathbb{B}_{\eta-\zeta}^\top : \nabx\overline{\bu}  = 0, \label{divfree2} 
\\
\partial_t \overline{\bu}   = \Delx\overline{\bu}   
-
\nabx\overline{p} + \divx\overline{\bT} 
+
 \mathbf{h}_{\eta-\zeta}(\overline{\bu} )
-
\divx   \mathbb{G}_{\eta-\zeta}(\overline{\bu} ,\overline{p} ,\overline{\bT} ), \label{momEq2}
\\
\partial_t^2  \eta - \gamma\partial_t\partial_y^2  \eta +  \partial_y^4 \eta 
=  -([
\mathbb{A}_{\eta-\zeta}\nabx\overline{\bu}   
-
\mathbb{B}_{\eta-\zeta}(\overline{p} -\overline{\bT} )]\bn_{\zeta})\circ  \bm{\varphi}_{\zeta}\cdot\bn \,\det(\partial_y\bm{\varphi}_\zeta), 
\label{shellEQ2}
\\
\partial_t\overline{\bT} 
+
\overline{\bu}  \cdot\nabx \overline{\bT}  
=
\mathbb{W}(\nabx\overline{\bu}) \overline{\bT} 
+
\overline{\bT} \mathbb{W}( (\nabx\overline{\bu} )^\top)
-
\varepsilon (1
-\Delx)\overline{\bT} 
-\varepsilon
\divx(\nabx \overline{\bT} (\mathbb{I}-\mathbb{A}_{\eta-\zeta}))
\nonumber
\\
+\varepsilon
(1-J_{\eta-\zeta})  \overline{\bT}
+
\mathbb{H}_{\eta-\zeta}(\overline{\bu} ,\overline{\bT} ) \label{solute2}
\end{align}
on $I \times \Omega_{\zeta}$ (with \eqref{shellEQ2} posed on $I\times\omega$) subject to the interface condition $\overline{\bu}\circ\bm{\varphi}_\zeta=(\partial_t\eta)\bn $ on $I\times \omega$. Here,
\begin{align*}
&J_{\eta-\zeta} :=\det(\nabx \bm{\Psi}_{\eta-\zeta}),
\\
&
\mathbb{B}_{\eta-\zeta}:=
J_{\eta-\zeta} \nabx \bm{\Psi}_{\eta-\zeta}^{-1}\circ \bm{\Psi}_{\eta-\zeta}, 
\\
&\mathbb{A}_{\eta-\zeta}:=
\mathbb{B}_{\eta-\zeta}(\nabx \bm{\Psi}_{\eta-\zeta}^{-1}\circ \bm{\Psi}_{\eta-\zeta})^\top,
\\&
\mathbb{H}_{\eta-\zeta}(\overline{\bu} ,\overline{\bT} )
=
(1-J_{\eta-\zeta})\partial_t\overline{\bT} 
-
J_{\eta-\zeta} \nabx \overline{\bT} \cdot\partial_t\bm{\Psi}_{\eta-\zeta}^{-1}\circ \bm{\Psi}_{\eta-\zeta}
+
\mathbb{W}(\nabx\overline{\bu}) (\mathbb{B}_{\eta-\zeta}-\mathbb{I})\overline{\bT} 
\\&
\qquad\qquad
+
\overline{\bT} (\mathbb{B}_{\eta-\zeta}-\mathbb{I})^\top\mathbb{W}( (\nabx\overline{\bu} )^\top)
+
\overline{\bu}  \cdot\nabx \overline{\bT}  (\mathbb{I}-\mathbb{B}_{\eta-\zeta})
 ,
\\&
\mathbb{G}_{\eta-\zeta}(\overline{\bu} ,\overline{p} ,\overline{\bT} )
=
(\mathbb{I} -\mathbb{A}_{\eta-\zeta}) \nabx\overline{\bu} 
-
(\mathbb{I} -\mathbb{B}_{\eta-\zeta})(\overline{p} -\overline{\bT} ),
\\&
 \mathbf{h}_{\eta-\zeta}(\overline{\bu} )
 =
 (1-J_{\eta-\zeta})\partial_t\overline{\bu} 
-
J_{\eta-\zeta} \nabx \overline{\bu} \partial_t\bm{\Psi}_{\eta-\zeta}^{-1}\circ \bm{\Psi}_{\eta-\zeta}
-
\nabx\overline{\bu} \mathbb{B}_{\eta-\zeta}\overline{\bu}.
\end{align*} 
Furthermore, by following a similar argument as \cite[Lemma 4.2]{breit2022regularity}, we have that $(\eta, \overline{\bu}, \overline{p}, \overline{\bT} )$ is a strong solution in the sense that it inherits on $\Ozeta$, the same regularity properties of $(\eta, \bu, p, \bT)$ in Definition \ref{def:strongSolution}. With this preparation in hand,
we are now going to obtain estimates for the differences
$
(\eta-\zeta ,  \overline{\bu}-\bv, \overline{p}-\pi, \overline{\bT} -\bU)
$ where we start with the components that describe the solvent-structure subproblem.
\subsection{Estimate for the solvent-structure interaction}
Since the estimate for the solvent-structure subsystem remains the same as analysed in \cite[Section 4.1]{mensah2025vanishing}, we have that
\begin{equation}
\begin{aligned}
\label{velShellDiff2}
\big(&\Vert (\overline{\bu}-\bv)(t)\Vert_{L^2(\Omega_{\zeta})}^2
+
 \Vert  \partial_t(\eta-\zeta)(t) \Vert_{L^{2}(\omega )}^2
+
\Vert  \naby^2(\eta-\zeta)(t) \Vert_{L^{2}(\omega )}^2 \big)
\\&\qquad+ 
\int_0^t
\Vert \nabx(\overline{\bu}-\bv)\Vert_{L^2(\Omega_{\zeta})}^2\dt'
+
\gamma 
\int_0^t\Vert  \partial_{t'}\naby(\eta-\zeta) \Vert_{L^{2}(\omega )}^2
\dt'
\\
\lesssim &
\Vert \overline{\bu}_0-\bv_0\Vert_{L^2(\Omega_{\zeta})}^2
+
 \Vert   \eta_\star-\zeta_\star \Vert_{L^{2}(\omega )}^2
+
\Vert  \naby^2(\eta_0-\zeta_0) \Vert_{L^{2}(\omega )}^2 
   \\&
+ 
\int_0^t
\Vert  \overline{\bT} - \bU\Vert_{L^2(\Ozeta)}^2\dt'
+ 
\int_0^t
\Vert \overline{\bu} - \bv\Vert_{L^2(\Ozeta)}^2\dt'
\\&
+ 
\int_0^t 
  \Vert \partial_{t'}(\eta-\zeta)\Vert_{L^{2}(\omega)}^2
   \Vert \overline{\bu} \Vert_{W^{2,2}(\Ozeta)}^2  
 \dt'
\\& 
+ 
 \int_0^t
\Vert \eta-\zeta\Vert_{W^{2,2}(\omega)}^2
\big( c(\gamma)+\Vert \partial_{t'}\overline{\bu}  \Vert_{L^2(\Omega_{\zeta})}^2
+
\Vert \overline{\bu}  \Vert_{W^{2,2}(\Ozeta)}^2 
\big)\dt' 
\\& 
+  \int_0^t
\Vert \eta-\zeta\Vert_{W^{2,2}(\omega)}^2
\big( \Vert \overline{p}  \Vert_{W^{1,2}(\Ozeta)}^2
+\Vert \overline{\bT}  \Vert_{W^{1,2}(\Ozeta)}^2
\big)\dt' 
\end{aligned}
\end{equation}
hold for any $t\in I$. 

\subsection{Estimate for the solute}
\label{sec:Estimate for the solute}
With the desired estimate \eqref{velShellDiff2} for the solvent-structure subproblem in the previous subsection, we are now going to obtain an estimate for  $\Vert (\overline{\bT}-\bU)(t)\Vert_{L^2(\Omega_{\zeta})}^2$.  First of all, we note that due to Proposition \ref{prop:zeroCorotational}, if we test \eqref{soluteL} with $\bU$, we obtain
\begin{align*}
\frac{1}{2}\frac{\dd}{\dt} \Vert  \bU(t) \Vert_{L^{2}( \Omega_{\zeta})}^2 
=
0.
\end{align*} 
If we also test \eqref{solute2} with $\overline{\bT}$, we obtain
\begin{align*}
\frac{1}{2}\frac{\dd}{\dt} \Vert  \overline{\bT}(t) \Vert_{L^{2}( \Omega_{\zeta})}^2 
=&
\frac{1}{2}
\int_{\partial \Omega_{\zeta}}(\bn\partial_t\zeta)\circ\bm{\varphi}_\zeta^{-1}\cdot\bn_\zeta\vert\overline{\bT}\vert^2\dd\mathcal{H}^1
-
\int_{\Omega_{\zeta}}(\overline{\bu}\cdot\nabx)\overline{\bT} :\overline{\bT}\dx
\\
&+
\int_{\Omega_{\zeta}} [\varepsilon\Delx \overline{\bT} 
-
\varepsilon
\divx(\nabx \overline{\bT} (\mathbb{I}-\mathbb{A}_{\eta-\zeta}))
+
\varepsilon
(1-J_{\eta-\zeta})  \overline{\bT}
+
\mathbb{H}_{\eta-\zeta}(\overline{\bu},\overline{\bT} )]: \overline{\bT}\dx
\\
&
 -\varepsilon
 \Vert  \bT \Vert_{L^{2}( \Omega_{\zeta})}^2.
\end{align*}
Furthermore, by Reynold's transport theorem,
\begin{align*}
- \frac{\dd}{\dt}\int_{\Omega_{\zeta}}\overline{\bT} :\bU\dx
=&
-
\int_{\partial \Omega_{\zeta}}(\bn\partial_t\zeta)\circ\bm{\varphi}_\zeta^{-1}\cdot\bn_\zeta\overline{\bT} :\bU\dd\mathcal{H}^1
-
\int_{\Omega_{\zeta}} \partial_t\bU:\overline{\bT}\dx
-
\int_{\Omega_{\zeta}}\partial_t\overline{\bT} :\bU\dx
\\
=&
-
\int_{\partial \Omega_{\zeta}}(\bn\partial_t\zeta)\circ\bm{\varphi}_\zeta^{-1}\cdot\bn_\zeta\overline{\bT} :\bU \dd\mathcal{H}^1
+
\int_{\Omega_{\zeta}}( \bv\cdot \nabx) \bU:\overline{\bT} \dx
\\&
-
\int_{\Omega_{\zeta}}[\mathbb{W}(\nabx\bv) \bU
+
  \bU\mathbb{W}((\nabx\bv )^\top)
]:\overline{\bT} \dx
\\&
+
\int_{\Omega_{\zeta}} [(\overline{\bu} \cdot \nabx) \overline{\bT}
+
\varepsilon \overline{\bT}]:\bU \dx
\\&-
\int_{\Omega_{\zeta}}[\mathbb{W}(\nabx\overline{\bu} )\overline{\bT} 
+
\overline{\bT} \mathbb{W}( (\nabx\overline{\bu} )^\top)
]:\bU\dx
\\&
-
\int_{\Omega_{\zeta}}[\varepsilon\Delx \overline{\bT} 
-
\varepsilon
\divx(\nabx \overline{\bT} (\mathbb{I}-\mathbb{A}_{\eta-\zeta}))
+
\varepsilon
(1-J_{\eta-\zeta})  \overline{\bT}
+
\mathbb{H}_{\eta-\zeta}(\overline{\bu} ,\overline{\bT})]:\bU\dx.
\end{align*}
However, by using the divergence-free condition \eqref{divfreeL} and the interface condition \eqref{interfaceL}, we observe that
\begin{equation}
\begin{aligned}
\label{j1j2X}
\int_{\Omega_{\zeta}}&\big[(\bv\cdot \nabx) \bU:\overline{\bT} 
+
(\overline{\bu} \cdot \nabx) \overline{\bT}:\bU
-
(\overline{\bu}\cdot\nabx)\overline{\bT}:\overline{\bT}
\big]\dx
\\&+
\frac{1}{2}
\int_{\partial \Omega_{\zeta}}(\bn\partial_t\zeta)\circ\bm{\varphi}_\zeta^{-1}\cdot\bn_\zeta\big(\vert\overline{\bT}\vert^2
-2\overline{\bT}:\bU
\big)
\dd\mathcal{H}^1
\\&
=
\int_{\Ozeta}((\overline{\bu} - \bv )\cdot \nabx) \overline{\bT}:  (\bU - \overline{\bT} )\dx.
\end{aligned}
\end{equation}
Because of the basic identity $\tfrac{1}{2}|a-b|^2=\tfrac{1}{2}|a|^2+\tfrac{1}{2}|b|^2-ab$,
if we combine all the information above, we obtain
\begin{equation}
\begin{aligned}
\label{diffSolu1}
\frac{1}{2}\frac{\dd}{\dt} \Vert  (\overline{\bT}-\bU)(t) \Vert_{L^{2}( \Omega_{\zeta})}^2  
= & 
\int_{\Ozeta}((\overline{\bu} - \bv )\cdot \nabx) \overline{\bT}:  (\bU - \overline{\bT} )\dx
\\&
 -
\int_{\Omega_{\zeta}}[\mathbb{W}(\nabx\bv) \bU
+
  \bU\mathbb{W}((\nabx\bv )^\top)
]: \overline{\bT}  \dx 
\\
&
-
\int_{\Omega_{\zeta}}[\mathbb{W}(\nabx \overline{\bu})\overline{\bT} + \overline{\bT}\mathbb{W}((\nabx \overline{\bu})^\top)]:\bU\dx
\\
&-
\int_{\Omega_{\zeta}}\varepsilon (1- \Delx) \overline{\bT} 
: (\overline{\bT}-\bU)\dx
\\&-
\int_{\Omega_{\zeta}} 
\varepsilon
\divx(\nabx \overline{\bT} (\mathbb{I}-\mathbb{A}_{\eta-\zeta}))
 : (\overline{\bT}-\bU)\dx
 \\&+
 \int_{\Omega_{\zeta}} 
 \varepsilon
(1-J_{\eta-\zeta})  \overline{\bT} : (\overline{\bT}-\bU)\dx
\\&+
\int_{\Omega_{\zeta}} 
\mathbb{H}_{\eta-\zeta}( \overline{\bu} ,\overline{\bT}): (\overline{\bT}-\bU)\dx
\\&=:K_1+\ldots+K_7.
\end{aligned}
\end{equation} 
The main differences between \eqref{diffSolu1} and \cite[(4.20)]{mensah2025vanishing} is the absence of the damping difference present on the left and side of \cite[(4.20)]{mensah2025vanishing} as well as the $\varepsilon$-parameter term $K_6$ in \eqref{diffSolu1} which does not also appear in \cite[(4.20)]{mensah2025vanishing}. Since we have optimised the analysis in \cite[(4.20)]{mensah2025vanishing} to avoid utilising the damping difference, we can essentially repeat the estimates that are common in our both settings for completeness. Furthermore,  the $\varepsilon$-parameter term $K_6$  will be fairly easy to deal with as we shall soon see. Moving on, we 
%
now note that due to Ladyzhenskaya's inequality and the fact that the term $\Vert\nabx\overline{\bT}(t)\Vert_{L^2(\Ozeta)}$ is essentially bounded in time, we obtain
\begin{equation}
\begin{aligned}
\label{k2}
\int_0^t
K_1\dt'  
\lesssim&\int_0^t\big(
\Vert \overline{\bT}-\bU\Vert_{L^2(\Omega_{\zeta})}
\Vert   \overline{\bu} - \bv \Vert_{L^2(\Omega_{\zeta})}^{1/2}\Vert \nabx( \overline{\bu} - \bv) \Vert_{L^2(\Omega_{\zeta})}^{1/2}
\\&\times
\Vert  \nabx\overline{\bT} \Vert_{L^{2}(\Omega_{\zeta})}^{1/2}
\Vert  \overline{\bT} \Vert_{W^{2,2}(\Omega_{\zeta})}^{1/2}\big)\dt'
\\
\leq&
\delta_1
\sup_{t'\in (0,t]}\Vert (\overline{\bT}-\bU)(t')\Vert_{L^2(\Omega_{\zeta})}^2
+
\delta_1
\int_0^t
\Vert \nabx( \overline{\bu} - \bv) \Vert_{L^2(\Omega_{\zeta})}^2\dt'
\\&+
c(\delta_1)
\int_0^t
 \Vert \overline{\bu} - \bv \Vert_{L^2(\Omega_{\zeta})}^2
\Vert  \overline{\bT} \Vert_{W^{2,2}(\Omega_{\zeta})}^2\dt'
\end{aligned}
\end{equation}
for any $\delta_1>0$ and for any $t\in I$.
To estimate $K_2$ and $K_3$,   we first use the identity $\mathbb{W}((\nabx\bu )^\top)=-\mathbb{W}(\nabx\bu)$ and the relation $(\mathbb{A}\mathbb{B}):\mathbb{C}=(\mathbb{C}\mathbb{A}):\mathbb{B}$ which holds for matrices $\mathbb{A}\in \mathbb{R}^{2\times2}$ and $\mathbb{B},\mathbb{C}\in \mathbb{R}^{2\times2}_{\mathrm{symm}}$ to rewrite the terms in $K_2$ and $K_3$ as  
\begin{align*}
[\mathbb{W}(\nabx\bv) \bU
&+
  \bU\mathbb{W}((\nabx\bv )^\top)
]: \overline{\bT}  
+
[\mathbb{W}(\nabx \overline{\bu})\overline{\bT} + \overline{\bT}\mathbb{W}((\nabx \overline{\bu})^\top)]:\bU  
\\=& 
\tfrac{1}{2}
\Big\{
\bU \nabx (\overline{\bu}-\bv):(\overline{\bT}-\bU) 
-
\bU (\nabx (\overline{\bu}-\bv))^\top:(\overline{\bT}-\bU) 
 \Big\}
 \\
 \quad-&\tfrac{1}{2}\Big\{
 (\overline{\bT}-\bU) \nabx (\overline{\bu}-\bv):\bU
 -
 (\overline{\bT}-\bU) (\nabx (\overline{\bu}-\bv))^\top:\bU
 \Big\}
\end{align*} 
Thus, it follows that
\begin{align*}
\int_0^t (K_2+K_3)\dt'
\lesssim&
\int_0^t \Vert \nabx( \overline{\bu} - \bv) \Vert_{L^2(\Omega_{\zeta})} 
 \Vert \overline{\bT} -\bU \Vert_{L^2(\Omega_{\zeta})} 
\Vert  \bU \Vert_{L^{\infty}(\Omega_{\zeta})} \dt'
\\
\leq
&\delta_2
\int_0^t
\Vert \nabx( \overline{\bu} - \bv) \Vert_{L^2(\Omega_{\zeta})}^2\dt'
+ 
c(\delta_2)
\int_0^t
 \Vert \overline{\bT} -\bU \Vert_{L^2(\Omega_{\zeta})}^2
\Vert  \bU \Vert_{L^{\infty}(\Omega_{\zeta})}^2\dt'.
\end{align*} 
hold for any $\delta_2>0$ and for any $t\in I$. Next, we also have that
\begin{align*} 
\int_0^t
K_4\dt'  
&\leq
\delta_4
\sup_{t'\in (0,t]}
\Vert (\overline{\bT}-\bU)(t') \Vert_{L^{2}( \Omega_{\zeta})}^2
+
c(\delta_4) \varepsilon^2\int_0^t
\Vert  \overline{\bT} \Vert_{W^{2,2}(\Omega_{\zeta})}^2\dt'
\end{align*} 
for any $\delta_4>0$ and for any $t\in I$.  Also, since $\Vert \overline{\bT}(t) \Vert_{W^{1,2}( \Omega_{\zeta})}^2$ is essentially bounded in time,
\begin{align*}  
\int_0^t
K_5\dt'
\lesssim&
\int_0^t
\Vert  \overline{\bT}-\bU \Vert_{L^2(\Omega_{\zeta})}
\Vert \varepsilon\overline{\bT} \Vert_{W^{2,2}( \Omega_{\zeta})}
\Vert \eta-\zeta \Vert_{W^{1,\infty}( \omega)}\dt'
\\&
+
\int_0^t
\Vert  \overline{\bT}-\bU \Vert_{L^\infty(\Omega_{\zeta})}
\Vert \varepsilon\overline{\bT} \Vert_{W^{1,2}( \Omega_{\zeta})}
\Vert \eta-\zeta \Vert_{W^{2,2}( \omega)}
\dt'
\\
\leq&
\delta_5
\sup_{t'\in (0,t]}\Vert (\overline{\bT}-\bU)(t')\Vert_{L^2(\Omega_{\zeta})}^2
+
c(\delta_5)\varepsilon^2
\int_0^t
\Vert \eta-\zeta \Vert_{W^{2,2}( \omega)}^2 
\Vert \overline{\bT} \Vert_{W^{2,2}( \Omega_{\zeta})}^2
\dt'
\\&
+
\delta_5
\sup_{t'\in (0,t]}
\Vert (\eta-\zeta)(t') \Vert_{W^{2,2}( \omega)}^2 
+
c(\delta_5) \varepsilon^2
\int_0^t
\big(
\Vert  \overline{\bT}\Vert_{W^{2,2}(\Omega_{\zeta})}^2 +\Vert \bU \Vert_{L^\infty(\Omega_{\zeta})}^2\big)  \dt'
\end{align*}
hold for any $\delta_5>0$ and for any $t\in I$. We also observe that
\begin{align*}
\int_0^t
K_6\dt' 
&\lesssim
\int_0^t
\Vert \eta-\zeta \Vert_{W^{1,2}(\omega)} 
\Vert  \varepsilon(\overline{\bT}-\bU)\Vert_{L^\infty(\Ozeta)} 
\Vert   \overline{\bT}\Vert_{L^2(\Ozeta)} 
\dt'
\\&
\lesssim
\varepsilon^2
\int_0^t
\big(\Vert  \overline{\bT}\Vert_{W^{2,2}( \Omega_{\zeta})}^2 + \Vert \bU  \Vert_{L^{\infty}( \Omega_{\zeta})}^2\big)\dt'
+ 
\int_0^t
\Vert \eta-\zeta \Vert_{W^{1,2}(\omega)}^2 
\Vert \overline{\bT}\Vert_{L^2(\Ozeta)}^2 
\dt'.
\end{align*}
For $K_7$, we use the definition of $\mathbb{H}_{\eta-\zeta}(\overline{\bu} ,\overline{\bT} )
$ given after \eqref{solute2} to expand it as 
\begin{align*}
\int_0^t
K_7\dt' 
&
=
\int_0^t
\int_{\Omega_{\zeta}}
 (1-J_{\eta-\zeta})\partial_{t'}\overline{\bT} 
 :
 (\overline{\bT}-\bU)
 \dx\dt' 
-
\int_0^t
\int_{\Omega_{\zeta}} J_{\eta-\zeta} \nabx \overline{\bT} \cdot\partial_{t'}\bm{\Psi}_{\eta-\zeta}^{-1}\circ \bm{\Psi}_{\eta-\zeta} :(\overline{\bT}-\bU) \dx\dt' 
\\&\qquad 
+
\int_0^t
\int_{\Omega_{\zeta}} 
\overline{\bu}  \cdot\nabx \overline{\bT}  (\mathbb{I}-\mathbb{B}_{\eta-\zeta}):(\overline{\bT}-\bU)\dx\dt' 
+
\int_0^t
\int_{\Omega_{\zeta}} 
\mathbb{W}(\nabx\overline{\bu}) (\mathbb{B}_{\eta-\zeta}-\mathbb{I})\overline{\bT} :(\overline{\bT}-\bU)\dx\dt' 
\\&\qquad
+
\int_0^t
\int_{\Omega_{\zeta}} 
\overline{\bT} (\mathbb{B}_{\eta-\zeta}-\mathbb{I})^\top\mathbb{W}( (\nabx\overline{\bu} )^\top):(\overline{\bT}-\bU)\dx\dt' 
\\&
=:\int_0^t(K_7^a+K_7^b+K_7^c+K_7^d+K_7^e)\dt'
\end{align*} 
where
\begin{align*}
\int_0^t
 K_7^a\dt' 
&\lesssim
\int_0^t
\Vert
\overline{\bT}-\bU\Vert_{L^2(\Ozeta)}
\Vert \eta-\zeta \Vert_{W^{1,\infty}(\omega)}\Vert\partial_{t'}\overline{\bT} \Vert_{L^2(\Ozeta)}\dt'
\\
&\leq
\delta_7^a
\sup_{t'\in (0,t]}
\Vert (\overline{\bT}-\bU)(t') \Vert_{L^{2}( \Omega_{\zeta})}^2
+
c(\delta_7^a) 
\int_0^t
\Vert \eta-\zeta \Vert_{W^{2,2}(\omega)}^2
\Vert \partial_{t'}\overline{\bT} \Vert_{L^{2}( \Omega_{\zeta})}^2
\dt'
\end{align*}
hold for any $\delta_7^a>0$ and for any $t\in I$.
On the other hand, since
\begin{align*}
\sup_{t'\in (0,t]}\Vert
(\overline{\bT}-\bU)(t')\Vert_{L^6(\Ozeta)}^2
\lesssim \sup_{t'\in (0,t]}
\big(
\Vert  \overline{\bT}(t')  \Vert_{W^{1,2}( \Omega_{\zeta})}^2
+
\Vert \bU(t') \Vert_{L^{\infty}( \Omega_{\zeta})}^2
\big)
\end{align*}
is finite, it follows that
\begin{align*}
\int_0^t
 K_7^b\dt' 
&\lesssim
\int_0^t
\Vert
\overline{\bT}-\bU\Vert_{L^6(\Ozeta)}
\Vert \eta-\zeta \Vert_{W^{1,6}(\omega)}\Vert \overline{\bT} \Vert_{W^{1,6}(\Ozeta)}
\Vert \partial_{t'}(\eta-\zeta) \Vert_{L^{2}(\omega)}\dt' 
\\
&\leq
\delta_7^b
\sup_{t'\in (0,t]}
\Vert \partial_{t'}(\eta-\zeta)(t') \Vert_{L^{2}(\omega)}^2
+
c(\delta_7^b) 
\int_0^t
\Vert \eta-\zeta \Vert_{W^{2,2}(\omega)}^2
\Vert \overline{\bT} \Vert_{W^{2,2}( \Omega_{\zeta})}^2
\dt'
\end{align*} 
and by using the fact that $\Vert  \overline{\bu}(t)  \Vert_{W^{1,2}( \Omega_{\zeta})}^2$ is essentially bounded in time, we obtain
\begin{align*}
\int_0^t
 K_7^c\dt' 
&\lesssim
\int_0^t
\Vert
\overline{\bT}-\bU\Vert_{L^2(\Ozeta)}
\Vert
\overline{\bu} \Vert_{L^4(\Ozeta)}
\Vert \overline{\bT} \Vert_{W^{1,4}(\Ozeta)}
\Vert \eta-\zeta\Vert_{W^{1,\infty}(\omega)}
\dt' 
\\
&\leq
\delta_7^c
\sup_{t'\in (0,t]}
\Vert  (\overline{\bT}-\bU)(t') \Vert_{L^{2}(\omega)}^2
+
c(\delta_7^c) 
\int_0^t
\Vert \eta-\zeta \Vert_{W^{2,2}(\omega)}^2
\Vert \overline{\bT} \Vert_{W^{2,2}( \Omega_{\zeta})}^2
\dt'.
\end{align*} 
Moreover,
\begin{align*}
\int_0^t
(K_7^d+K_7^e)\dt' 
\leq&2
\int_0^t\int_{\Ozeta}
\vert\overline{\bT} \vert\,\vert\mathbb{B}_{\eta-\zeta}-\mathbb{I}\vert\,\vert\nabx\overline{\bu} \vert\,\vert \overline{\bT}-\bU\vert
\dx\dt'
\\&\lesssim
\int_0^t
\Vert \overline{\bT}\Vert_{L^6(\Ozeta)}
\Vert \eta-\zeta \Vert_{W^{1,6}(\omega)}
\Vert \overline{\bu}\Vert_{W^{1,6}(\Ozeta)}
\Vert \overline{\bT}-\bU\Vert_{L^2(\Ozeta)}
\dt'
\\&
\leq
\delta_7^d
\sup_{t'\in (0,t]}
\Vert (\overline{\bT}-\bU)(t') \Vert_{L^{2}( \Omega_{\zeta})}^2
+
c(\delta_7^d) 
\int_0^t
\Vert \eta-\zeta \Vert_{W^{2,2}(\omega)}^2
\Vert \overline{\bu} \Vert_{W^{2,2}( \Omega_{\zeta})}^2
\dt'
\end{align*}
since $\Vert \overline{\bT}\Vert_{L^6(\Ozeta)}$ is essentially bounded in time. 
\\
If we now collect all the estimates for the $K_i$'s, then for any $\delta\leq \min\{\delta_1, \delta_2,\ldots, \delta_7^c,\delta_7^d\}$, we obtain  from \eqref{diffSolu1} that
\begin{equation}
\begin{aligned} \label{diffSolu2}
\Vert (\overline{\bT}-\bU)(t)\Vert_{L^2(\Omega_{\zeta})}^2 
\leq& 
\Vert \overline{\bT}_0-\bU_0\Vert_{L^2(\Omega_{\zeta_0})}^2
+
\delta
\sup_{t'\in (0,t]}\Vert (\overline{\bT}-\bU)(t')\Vert_{L^2(\Omega_{\zeta})}^2
\\&
c(\delta)
\int_0^t
 \Vert \overline{\bT} -\bU \Vert_{L^2(\Omega_{\zeta})}^2
\Vert  \bU \Vert_{L^{\infty}(\Omega_{\zeta})}^2\dt'  
+
 \delta 
\int_0^t
\Vert \nabx( \overline{\bu} - \bv) \Vert_{L^2(\Omega_{\zeta})}^2\dt'
\\&+
\delta 
\sup_{t'\in (0,t]}
\Vert \partial_{t'}(\eta-\zeta)(t') \Vert_{L^{2}( \omega)}^2 
 +
\delta 
\sup_{t'\in (0,t]}
\Vert (\eta-\zeta)(t') \Vert_{W^{2,2}( \omega)}^2  
\\&+
c(\delta) 
\int_0^t
\big(
 \Vert \overline{\bu} - \bv \Vert_{L^2(\Omega_{\zeta})}^2
 +
 \Vert \eta-\zeta \Vert_{W^{2,2}(\omega)}^2
 \big) 
\Vert  \overline{\bT} \Vert_{W^{2,2}(\Omega_{\zeta})}^2
 \dt'    
\\&
+
c(\delta) 
\int_0^t
\Vert \eta-\zeta \Vert_{W^{2,2}(\omega)}^2
\big( 
\Vert \partial_{t'}\overline{\bT} \Vert_{L^{2}( \Omega_{\zeta})}^2 
+
\Vert  \overline{\bu} \Vert_{W^{2,2}(\Omega_{\zeta})}^2  
\big)
\dt'  
\\
&+ 
\varepsilon^2c(\delta)
\int_0^t
\big(\Vert \overline{\bT} \Vert_{W^{2,2}( \Omega_{\zeta})}^2 
+
\Vert \bU \Vert_{L^{\infty}( \Omega_{\zeta})}^2
\big)
\dt'
\end{aligned}
\end{equation}
holds for any $\delta>0$ and for any $t\in I$.

\subsection{Full estimate}
In order to get our desired estimate  \eqref{contrEst0}, we sum up the estimates \eqref{velShellDiff2}  and \eqref{diffSolu2} to obtain 
\begin{equation}
\begin{aligned} 
\big(&\Vert (\overline{\bu}-\bv)(t)\Vert_{L^2(\Omega_{\zeta})}^2
+
 \Vert  \partial_t(\eta-\zeta)(t) \Vert_{L^{2}(\omega )}^2
+
\Vert  \naby^2(\eta-\zeta)(t) \Vert_{L^{2}(\omega )}^2
+
\Vert (\overline{\bT}-\bU)(t)\Vert_{L^2(\Omega_{\zeta})}^2 
 \big)
\\&\qquad+ 
\int_0^t
\Vert \nabx(\overline{\bu}-\bv)\Vert_{L^2(\Omega_{\zeta})}^2\dt'
+
\gamma 
\int_0^t\Vert  \partial_{t'}\naby(\eta-\zeta) \Vert_{L^{2}(\omega )}^2
\dt'
\\
\lesssim & 
\Vert \overline{\bu}_0-\bv_0\Vert_{L^2(\Omega_{\zeta})}^2
+
 \Vert   \eta_\star-\zeta_\star \Vert_{L^{2}(\omega )}^2
+
\Vert  \naby^2(\eta_0-\zeta_0) \Vert_{L^{2}(\omega )}^2 
+
\Vert \overline{\bT}_0-\bU_0\Vert_{L^2(\Omega_{\zeta_0})}^2
   \\&
+ 
\int_0^t
\Vert \overline{\bu} - \bv\Vert_{L^2(\Ozeta)}^2
 \big(1+\Vert \overline{\bT}\Vert_{W^{2,2}(\Ozeta)}^2 \big)
 \dt'
\\&
+ 
\int_0^t 
  \Vert \partial_{t'}(\eta-\zeta)\Vert_{L^{2}(\omega)}^2
   \Vert \overline{\bu} \Vert_{W^{2,2}(\Ozeta)}^2  
 \dt'
\\& 
+ 
 \int_0^t
\Vert \eta-\zeta\Vert_{W^{2,2}(\omega)}^2
\big( c(\gamma)+\Vert \partial_{t'}\overline{\bu}  \Vert_{L^2(\Omega_{\zeta})}^2
+
\Vert \overline{\bu}  \Vert_{W^{2,2}(\Ozeta)}^2 
\big)\dt' 
\\& 
+  \int_0^t
\Vert \eta-\zeta\Vert_{W^{2,2}(\omega)}^2
\big( \Vert \overline{p}  \Vert_{W^{1,2}(\Ozeta)}^2
+\Vert\partial_{t'} \overline{\bT}  \Vert_{L^{2}(\Ozeta)}^2
+\Vert \overline{\bT}  \Vert_{W^{2,2}(\Ozeta)}^2
\big)\dt' 
\\
&
+ 
\int_0^t
\Vert  \overline{\bT} - \bU\Vert_{L^2(\Ozeta)}^2\big(1+\Vert \bU\Vert_{L^\infty(\Ozeta)}^2 \big)\dt'
\\
&+ 
\varepsilon^2 
\int_0^t
\big(\Vert \overline{\bT} \Vert_{W^{2,2}( \Omega_{\zeta})}^2 
+
\Vert \bU \Vert_{L^{\infty}( \Omega_{\zeta})}^2
\big)
\dt'
\end{aligned}
\end{equation}
Our desired estimate now follows from Gr\"onwall's lemma.

\end{proof}
Having proved Proposition \ref{prop:zeroCorotational}, we now  set $(\eta, \bu, p, \bT)=(\eta^\varepsilon, \bu^\varepsilon, p^\varepsilon,  \bT^\varepsilon)_{\varepsilon>0}$  in  \eqref{divfree}-\eqref{solute}.  
We note that if the corresponding dataset $(  \bT_0, \bu_0, \eta_0, \eta_\star)$ satisfies \eqref{dataConv}, then the transformed dataset $(  \overline{\bT}^\varepsilon_0, \overline{\bu}^\varepsilon_0, \eta^\varepsilon_0, \eta^\varepsilon_\star)$ satisfies
\begin{align*}
&\eta^\varepsilon_0 \rightarrow \zeta_0 \quad\text{in}\quad W^{2,2}( \omega), 
\\&\eta^\varepsilon_\star \rightarrow \zeta_\star \quad\text{in}\quad L^2( \omega),
\\&\overline{\bu}^\varepsilon_0  \rightarrow \bv_0 \quad\text{in}\quad L^2( \Omega_{\zeta(0)}), 
\\&
\overline{\bT}^\varepsilon_0  \rightarrow \bU_0 \quad\text{in}\quad L^2( \Omega_{\zeta(0)})
\end{align*}
as $\varepsilon\rightarrow0$.
Thus,  an immediate corollary of Proposition \ref{prop:zeroCorotational} is the following result.
\begin{corollary}
\label{cor:main}
Let $(\eta^\varepsilon,\overline{\bu}^\varepsilon, \overline{p}^\varepsilon, \overline{\bT}^\varepsilon)_{\varepsilon>0}$ be a collection of strong solutions of \eqref{divfree}-\eqref{interface} with dataset $( \overline{\bT}^\varepsilon_0, \overline{\bu}^\varepsilon_0, \eta^\varepsilon_0, \eta^\varepsilon_\star)$ for which \eqref{dataConv} holds with $ \bU_0$ satisfying \eqref{mainDataForAllBounded}. Then we have
\begin{align*}
&\eta^\varepsilon \rightarrow \zeta \quad\text{in}\quad L^\infty(I;W^{2,2}( \omega)),  
\\&\partial_t\eta^\varepsilon \rightarrow \partial_t\zeta \quad\text{in}\quad L^\infty(I;L^{2}( \omega)),
\\&\partial_t\eta^\varepsilon \rightarrow \partial_t\zeta \quad\text{in}\quad L^2(I;W^{1,2}( \omega)),
\\&\overline{\bu}^\varepsilon  \rightarrow \bv \quad\text{in}\quad L^\infty(I;L^2( \Omega_{\zeta})), 
\\
&\nabx\overline{\bu}^\varepsilon  \rightarrow \nabx\bv \quad\text{in}\quad L^2(I;L^2( \Omega_{\zeta})),
\\&
\overline{\bT}^\varepsilon  \rightarrow \bU \quad\text{in}\quad L^\infty(I;L^2( \Omega_{\zeta}))
\end{align*}
where $(\zeta,\bv,  \bU)$ is an essentially bounded weak solution of \eqref{divfreeL}-\eqref{interfaceL} with dataset $(  \bU_0, \bv_0, \zeta_0, \zeta_\star)$.
\end{corollary}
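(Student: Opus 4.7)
The plan is to derive Corollary \ref{cor:main} directly from the contraction estimate \eqref{contrEst0} of Proposition \ref{prop:main} by passing to the limit $\varepsilon\to 0$. I would apply the proposition with $(\eta,\bu,p,\bT)=(\eta^\varepsilon,\bu^\varepsilon,p^\varepsilon,\bT^\varepsilon)$ against the fixed essentially bounded weak solution $(\zeta,\bv,\bU)$. The six convergences claimed in Corollary \ref{cor:main} then follow once the right-hand side of \eqref{contrEst0} is shown to vanish; this splits into handling the initial-data differences, the explicit $O(\varepsilon^2)$-remainder, and the Gronwall prefactor $K^\varepsilon$.

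For the initial-data contributions, the two structure-data terms are already in the required form by \eqref{dataConv}. The $L^2(\Omega\cup S_\ell)$-convergences $\bm{1}_{\Omega_{\eta_0^\varepsilon}}\bu_0^\varepsilon\to\bm{1}_{\Omega_{\zeta_0}}\bv_0$ and $\bm{1}_{\Omega_{\eta_0^\varepsilon}}\bT_0^\varepsilon\to\bm{1}_{\Omega_{\zeta_0}}\bU_0$ translate into $L^2(\Omega_{\zeta_0})$-convergences for the Hanzawa pull-backs $\overline{\bu}_0^\varepsilon$ and $\overline{\bT}_0^\varepsilon$ by using $\eta_0^\varepsilon\to\zeta_0$ in $W^{2,2}(\omega)$ together with the Lipschitz dependence \eqref{211and213} of the Hanzawa map on its subscript, after writing the difference as $(\bu_0^\varepsilon-\bv_0\circ\bm{\Psi}_{\eta_0^\varepsilon-\zeta_0}^{-1})\circ\bm{\Psi}_{\eta_0^\varepsilon-\zeta_0}$ and bounding each piece. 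For the $\varepsilon^2$-remainder, the $\Vert\bU\Vert_{L^\infty}$ factor is trivially $\varepsilon$-independent, while the $W^{2,2}$-bound on $\overline{\bT}^\varepsilon$ is obtained by testing \eqref{solute} with $-\Delta\bT^\varepsilon$: here Proposition \ref{prop:zeroCorotational} applies to the gradient $\nabla\bT^\varepsilon$ as well and produces a cancellation, leaving a parabolic energy inequality that yields $\varepsilon\Vert\nabla^2\bT^\varepsilon\Vert_{L^2(I\times\Omega_{\eta^\varepsilon})}^2\lesssim 1$ uniformly in $\varepsilon$, and hence $\varepsilon^2\int_0^t\Vert\overline{\bT}^\varepsilon\Vert_{W^{2,2}(\Omega_\zeta)}^2\dt'=O(\varepsilon)$ after transferring via \eqref{210and212}.

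The main obstacle is controlling $K^\varepsilon$ uniformly in $\varepsilon$, since its exponent is a time-integrated combination of high-order norms of $(\overline{\bu}^\varepsilon,\overline{p}^\varepsilon,\overline{\bT}^\varepsilon,\partial_t\overline{\bu}^\varepsilon,\partial_t\overline{\bT}^\varepsilon)$, each finite for a fixed strong solution by Definition \ref{def:strongSolution} but not a priori $\varepsilon$-uniform. The key observation is that the corotational cancellation of Proposition \ref{prop:zeroCorotational} yields $\Vert\bT^\varepsilon\Vert_{L^\infty_tL^\infty_x}\lesssim\Vert\bT_0^\varepsilon\Vert_{L^\infty_x}$ independently of $\varepsilon$, so $\bT^\varepsilon$ enters the momentum equation \eqref{momEq} as an $\varepsilon$-uniform forcing, and the Navier--Stokes--structure regularity theory of \cite{breit2022regularity, mensah2023weak} then propagates this into $\varepsilon$-uniform strong-solution bounds for the remaining quantities. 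Once $K^\varepsilon\lesssim 1$ uniformly, the right-hand side of \eqref{contrEst0} tends to zero and the six convergence statements of Corollary \ref{cor:main} follow.
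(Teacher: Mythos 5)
Your skeleton is the paper's: transfer the data convergences \eqref{dataConv} to the Hanzawa pull-backs (your use of \eqref{211and213} for $\overline{\bu}^\varepsilon_0,\overline{\bT}^\varepsilon_0$ is exactly the unstated step in the paper) and then let $\varepsilon\to0$ in the relative-energy estimate \eqref{contrEst0} of Proposition \ref{prop:main}. The paper stops there: it records the transformed-data convergences and declares the corollary immediate, with the Gr\"onwall factor $K$ treated as a constant attached to the strong solution; it does not attempt any $\varepsilon$-uniform regularity theory.

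The genuine gap is in your added argument that $K^\varepsilon\lesssim1$ uniformly. First, the bound $\Vert\bT^\varepsilon\Vert_{L^\infty(I\times\Omega_{\eta^\varepsilon})}\lesssim\Vert\bT^\varepsilon_0\Vert_{L^\infty}$ only helps if $\Vert\bT^\varepsilon_0\Vert_{L^\infty}$ is uniformly bounded, but \eqref{mainDataForAllStrong} and \eqref{dataConv} give only $\bT^\varepsilon_0\in W^{1,2}(\Omega_{\eta^\varepsilon_0})$ and $L^2$-convergence; \eqref{mainDataForAllBounded} constrains $\bU_0$, not $\bT^\varepsilon_0$, and $W^{1,2}\not\hookrightarrow L^\infty$ in two dimensions. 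Second, and more seriously, $K^\varepsilon$ contains $\int_0^t\Vert\overline{\bT}^\varepsilon\Vert_{W^{2,2}(\Omega_\zeta)}^2\,\mathrm{d}t'$ and $\int_0^t\Vert\partial_{t'}\overline{\bT}^\varepsilon\Vert_{L^2(\Omega_\zeta)}^2\,\mathrm{d}t'$, i.e.\ high norms of the \emph{solute}; an $L^\infty$ forcing bound fed into the solvent--structure regularity theory of \cite{breit2022regularity,mensah2023weak} cannot produce these, and your own second-order energy estimate yields only $\varepsilon\int_0^T\Vert\nabla^2\bT^\varepsilon\Vert_{L^2}^2\lesssim1$, i.e.\ $\int_0^T\Vert\bT^\varepsilon\Vert_{W^{2,2}}^2\lesssim\varepsilon^{-1}$. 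With that scaling $K^\varepsilon$ behaves like $e^{c/\varepsilon}$, which is not beaten by the $o(1)+O(\varepsilon)$ right-hand side of \eqref{contrEst0}, so the convergences do not follow from your argument as written. (In addition, testing \eqref{solute} with $-\Delta\bT^\varepsilon$ does not reduce to a pure corotational cancellation: the commutator terms involving $\nabla^2\bu^\varepsilon$ and the boundary terms generated on the moving domain survive and themselves require the very uniform bounds you are trying to establish.) If you want the corollary with a justification of $\varepsilon$-uniformity, you need genuinely $\varepsilon$-independent a priori estimates for the family of strong solutions, which is a separate piece of analysis that neither your sketch nor the paper supplies; the paper's proof simply applies \eqref{contrEst0} with the norms entering $K$ and the $\varepsilon^2$-weighted remainder regarded as given.
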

Now we note that since
\begin{equation}
\begin{aligned}
\label{triIneq}
\Vert \bm{1}_{\Omega_{\eta^\varepsilon}}
\bu^\varepsilon  - \bm{1}_{\Omega_{\zeta}}\bv \Vert_{L^2(\Omega \cup S_\ell)}^2
\lesssim&
\Vert(\bm{1}_{\Omega_{\eta^\varepsilon}}
  - \bm{1}_{\Omega_{\zeta}}) \bu^\varepsilon \Vert_{L^2(\Omega \cup S_\ell)}^2
+
\Vert
\bm{1}_{\Omega_{\zeta}}( \overline{\bu}^\varepsilon  - \bv)
\Vert_{L^2(\Omega \cup S_\ell)}^2
\\&+
  \Vert
\bm{1}_{\Omega_{\zeta}}(\bu^\varepsilon -\overline{\bu}^\varepsilon)
\Vert_{L^2(\Omega \cup S_\ell)}^2,
\end{aligned}
\end{equation}
the first two terms to the right converge to zero as $\varepsilon\rightarrow0$ due to Corollary \ref{cor:main}.
%
%
Also, since $ \overline{\bu}^\varepsilon = \bu^\varepsilon \circ \bm{\Psi}_{\eta^\varepsilon-\zeta}= \bu^\varepsilon (t, \bx+\bn(\eta^\varepsilon-\zeta)\phi)$, the last term to the right in \eqref{triIneq} also converges to zero as $\varepsilon\rightarrow0$ due to the strong uniform convergence of $\eta^\varepsilon$ to $\zeta$. Thus, it follows that
\begin{align*}
\bm{1}_{\Omega_{\eta^\varepsilon}} \bu^\varepsilon  \rightarrow \bm{1}_{\Omega_{\zeta}}\bv \quad\text{in}\quad L^\infty(I;L^2(\Omega \cup S_\ell)).
\end{align*}
Similarly, we obtain strong convergence of $\bm{1}_{\Omega_{\eta^\varepsilon}}(\nabx\bu^\varepsilon, 
\bT^\varepsilon )$ to their respective limits as stated in Theorem \ref{thm:main2} thereby finishing the proof.

\section*{Statements and Declarations} 
\subsection*{Funding}
This work has been partly supported by Grant ME 6391/1-1 (543675748) by the German Research
\subsection*{Author Contribution}
The authors wrote and reviewed the manuscript.
\subsection*{Conflict of Interest}
The authors declares that they have no conflict of interest.
\subsection*{Data Availability Statement}
Data sharing is not applicable to this article as no datasets were generated
or analyzed during the current study.
\subsection*{Competing Interests}
The authors have no competing interests to declare that are relevant to the content of this article.
%


\end{document}